\begin{document}

\allowdisplaybreaks



\newtheorem{theorem}{Theorem}
\newtheorem{lemma}[theorem]{Lemma}
\newtheorem{conjecture}[theorem]{Conjecture}
\newtheorem{proposition}[theorem]{Proposition}
\newtheorem{corollary}[theorem]{Corollary}
\newtheorem*{claim}{Claim}

\theoremstyle{definition}
\newtheorem{question}{Question}
\renewcommand{\thequestion}{\Alph{question}} 
\newtheorem*{definition}{Definition}
\newtheorem{example}[theorem]{Example}

\theoremstyle{remark}
\newtheorem{remark}[theorem]{Remark}
\newtheorem*{acknowledgement}{Acknowledgements}



\newenvironment{notation}[0]{%
  \begin{list}%
    {}%
    {\setlength{\itemindent}{0pt}
     \setlength{\labelwidth}{4\parindent}  
     \setlength{\labelsep}{\parindent}
     \setlength{\leftmargin}{5\parindent}
     \setlength{\itemsep}{0pt}
     }%
   }%
  {\end{list}}

\newenvironment{parts}[0]{%
  \begin{list}{}%
    {\setlength{\itemindent}{0pt}
     \setlength{\labelwidth}{1.5\parindent}
     \setlength{\labelsep}{.5\parindent}
     \setlength{\leftmargin}{2\parindent}
     \setlength{\itemsep}{0pt}
     }%
   }%
  {\end{list}}
\newcommand{\Part}[1]{\item[\upshape#1]}

\renewcommand{\a}{\alpha}
\renewcommand{\b}{\beta}
\newcommand{\g}{\gamma}
\renewcommand{\d}{\delta}
\newcommand{\e}{\epsilon}
\newcommand{\ve}{\varepsilon}
\newcommand{\f}{\varphi}
\newcommand{\bfphi}{{\boldsymbol{\f}}}
\renewcommand{\l}{\lambda}
\renewcommand{\k}{\kappa}
\newcommand{\lhat}{\hat\lambda}
\newcommand{\m}{\mu}
\newcommand{\bfmu}{{\boldsymbol{\mu}}}
\renewcommand{\o}{\omega}
\renewcommand{\r}{\rho}
\newcommand{\rbar}{{\bar\rho}}
\newcommand{\s}{\sigma}
\newcommand{\sbar}{{\bar\sigma}}
\renewcommand{\t}{\tau}
\newcommand{\z}{\zeta}

\newcommand{\D}{\Delta}
\newcommand{\G}{\Gamma}
\newcommand{\F}{\Phi}

\newcommand{\ga}{{\mathfrak{a}}}
\newcommand{\gA}{{\mathfrak{A}}}
\newcommand{\gb}{{\mathfrak{b}}}
\newcommand{\gB}{{\mathfrak{B}}}
\newcommand{\gc}{{\mathfrak{c}}}
\newcommand{\gC}{{\mathfrak{C}}}
\newcommand{\gm}{{\mathfrak{m}}}
\newcommand{\gn}{{\mathfrak{n}}}
\newcommand{\go}{{\mathfrak{o}}}
\newcommand{\gO}{{\mathfrak{O}}}
\newcommand{\gp}{{\mathfrak{p}}}
\newcommand{\gP}{{\mathfrak{P}}}
\newcommand{\gq}{{\mathfrak{q}}}
\newcommand{\gR}{{\mathfrak{R}}}
\newcommand{\gU}{{\mathfrak{U}}}

\newcommand{\Abar}{{\bar A}}
\newcommand{\Ebar}{{\bar E}}
\newcommand{\Kbar}{{\bar K}}
\newcommand{\Pbar}{{\bar P}}
\newcommand{\Sbar}{{\bar S}}
\newcommand{\Tbar}{{\bar T}}
\newcommand{\ybar}{{\bar y}}
\newcommand{\phibar}{{\bar\f}}

\newcommand{\Acal}{{\mathcal A}}
\newcommand{\Bcal}{{\mathcal B}}
\newcommand{\Ccal}{{\mathcal C}}
\newcommand{\Dcal}{{\mathcal D}}
\newcommand{\Ecal}{{\mathcal E}}
\newcommand{\Fcal}{{\mathcal F}}
\newcommand{\Gcal}{{\mathcal G}}
\newcommand{\Hcal}{{\mathcal H}}
\newcommand{\Ical}{{\mathcal I}}
\newcommand{\Jcal}{{\mathcal J}}
\newcommand{\Kcal}{{\mathcal K}}
\newcommand{\Lcal}{{\mathcal L}}
\newcommand{\Mcal}{{\mathcal M}}
\newcommand{\Ncal}{{\mathcal N}}
\newcommand{\Ocal}{{\mathcal O}}
\newcommand{\Pcal}{{\mathcal P}}
\newcommand{\Qcal}{{\mathcal Q}}
\newcommand{\Rcal}{{\mathcal R}}
\newcommand{\Scal}{{\mathcal S}}
\newcommand{\Tcal}{{\mathcal T}}
\newcommand{\Ucal}{{\mathcal U}}
\newcommand{\Vcal}{{\mathcal V}}
\newcommand{\Wcal}{{\mathcal W}}
\newcommand{\Xcal}{{\mathcal X}}
\newcommand{\Ycal}{{\mathcal Y}}
\newcommand{\Zcal}{{\mathcal Z}}

\renewcommand{\AA}{\mathbb{A}}
\newcommand{\BB}{\mathbb{B}}
\newcommand{\CC}{\mathbb{C}}
\newcommand{\FF}{\mathbb{F}}
\newcommand{\GG}{\mathbb{G}}
\newcommand{\NN}{\mathbb{N}}
\newcommand{\PP}{\mathbb{P}}
\newcommand{\QQ}{\mathbb{Q}}
\newcommand{\RR}{\mathbb{R}}
\newcommand{\ZZ}{\mathbb{Z}}

\newcommand{\bfa}{{\mathbf a}}
\newcommand{\bfb}{{\mathbf b}}
\newcommand{\bfc}{{\mathbf c}}
\newcommand{\bfe}{{\mathbf e}}
\newcommand{\bff}{{\mathbf f}}
\newcommand{\bfg}{{\mathbf g}}
\newcommand{\bfp}{{\mathbf p}}
\newcommand{\bfr}{{\mathbf r}}
\newcommand{\bfs}{{\mathbf s}}
\newcommand{\bft}{{\mathbf t}}
\newcommand{\bfu}{{\mathbf u}}
\newcommand{\bfv}{{\mathbf v}}
\newcommand{\bfw}{{\mathbf w}}
\newcommand{\bfx}{{\mathbf x}}
\newcommand{\bfy}{{\mathbf y}}
\newcommand{\bfz}{{\mathbf z}}
\newcommand{\bfA}{{\mathbf A}}
\newcommand{\bfF}{{\mathbf F}}
\newcommand{\bfB}{{\mathbf B}}
\newcommand{\bfD}{{\mathbf D}}
\newcommand{\bfG}{{\mathbf G}}
\newcommand{\bfI}{{\mathbf I}}
\newcommand{\bfM}{{\mathbf M}}
\newcommand{\bfzero}{{\boldsymbol{0}}}

\newcommand{\Adele}{\textsf{\upshape A}}
\newcommand{\Ahat}{\hat{A}}
\newcommand{\Adot}{A(\Adele_K)_{\bullet}}
\newcommand{\Aut}{\operatorname{Aut}}
\newcommand{\Br}{\operatorname{Br}}  
\newcommand{\Closure}{\textsf{\upshape C}} 
\newcommand{\Disc}{\operatorname{Disc}}
\newcommand{\Div}{\operatorname{Div}}
\newcommand{\End}{\operatorname{End}}
\newcommand{\Fbar}{{\bar{F}}}
\newcommand{\FOD}{\textup{FOM}}
\newcommand{\FOM}{\textup{FOD}}
\newcommand{\Gal}{\operatorname{Gal}}
\newcommand{\GL}{\operatorname{GL}}
\newcommand{\Index}{\operatorname{Index}}
\newcommand{\Image}{\operatorname{Image}}
\newcommand{\liftable}{{\textup{liftable}}}
\newcommand{\hhat}{{\hat h}}
\newcommand{\Ksep}{K^{\textup{sep}}}
\newcommand{\Ker}{{\operatorname{ker}}}
\newcommand{\Lsep}{L^{\textup{sep}}}
\newcommand{\Lift}{\operatorname{Lift}}
\newcommand{\LS}[2]{{\genfrac{(}{)}{}{}{#1}{#2}}} 
\newcommand{\vlim}{\operatornamewithlimits{\text{$v$}-lim}}
\newcommand{\wlim}{\operatornamewithlimits{\text{$w$}-lim}}
\renewcommand{\max}{\operatornamewithlimits{max}}
\newcommand{\MOD}[1]{~(\textup{mod}~#1)}
\newcommand{\Norm}{{\operatorname{\mathsf{N}}}}
\newcommand{\norm}[1]{\ensuremath{\| #1 \|}}
\newcommand{\notdivide}{\nmid}
\newcommand{\normalsubgroup}{\triangleleft}
\newcommand{\odd}{{\operatorname{odd}}}
\newcommand{\onto}{\twoheadrightarrow}
\newcommand{\Orbit}{\mathcal{O}}
\newcommand{\ord}{\operatorname{ord}}
\newcommand{\Per}{\operatorname{Per}}
\newcommand{\PrePer}{\operatorname{PrePer}}
\newcommand{\PGL}{\operatorname{PGL}}
\newcommand{\Pic}{\operatorname{Pic}}
\newcommand{\Prob}{\operatorname{Prob}}
\newcommand{\Qbar}{{\bar{\QQ}}}
\newcommand{\rank}{\operatorname{rank}}
\newcommand{\Rat}{\operatorname{Rat}}
\newcommand{\reduction}[1]{\ensuremath{\widetilde{#1}}}
\newcommand{\Resultant}{\operatorname{Res}}
\renewcommand{\setminus}{\smallsetminus}
\newcommand{\Span}{\operatorname{Span}}
\newcommand{\Support}{\operatorname{Support}}
\newcommand{\tors}{{\textup{tors}}}
\newcommand{\Trace}{\operatorname{Trace}}
\newcommand{\twistedtimes}{\mathbin{%
   \mbox{$\vrule height 6pt depth0pt width.5pt\hspace{-2.2pt}\times$}}}
\newcommand{\UHP}{{\mathfrak{h}}}    
\newcommand{\Vdot}{V(\Adele_K)_{\bullet}}
\newcommand{\Wreath}{\operatorname{Wreath}}
\newcommand{\<}{\langle}
\renewcommand{\>}{\rangle}

\newcommand{\longhookrightarrow}{\lhook\joinrel\longrightarrow}
\newcommand{\longonto}{\relbar\joinrel\twoheadrightarrow}

\newcommand{\Spec}{\operatorname{Spec}}
\renewcommand{\div}{{\operatorname{div}}}

\newcounter{CaseCount}
\Alph{CaseCount}
\def\Case#1{\par\vspace{1\jot}\noindent
\stepcounter{CaseCount}
\framebox{Case \Alph{CaseCount}.\enspace#1}
\par\vspace{1\jot}\noindent\ignorespaces}

\newcommand{\helpme}[1]{{\sf $\heartsuit\heartsuit$ Meta-remark: [#1]}}
\newcommand{\fixme}[1]{{\sf $\spadesuit\spadesuit$ Meta-remark: [#1]}}

\title[Integral points in orbits]{
   A quantitative estimate for quasi-integral points in orbits}
\date{\today}

\author{Liang-Chung Hsia and Joseph H. Silverman}
\address{Department of Mathematics,
         National Central University,
         Chung-Li, 32054 Taiwan, R. O. C.}
\email{hsia@math.ncu.edu.tw}
\address{Mathematics Department, 
         Box 1917,
         Brown University, 
         Providence, RI 02912 USA}
\email{jhs@math.brown.edu}
\subjclass{Primary: 37P15 Secondary: 11B37, 11G99, 14G99}
\keywords{Arithmetic dynamics, integral points}
\thanks{The fist author's research is supported by
  NSC-97-2918-I-008-005 and NSC-96-2115-008-012-MY3.
  The second author's research is supported by NSF DMS-0650017
  and DMS-0854755.}

\begin{abstract}
Let $\f(z)\in K(z)$ be a rational function of degree~$d\ge2$ defined
over a number field whose second iterate~$\f^2(z)$ is not a
polynomial, and let~$\a\in K$.  The second author previously proved
that the forward orbit~$\Orbit_\f(\a)$ contains only finitely many
quasi-$S$-integral points.  In this note we give an explicit upper
bound for the number of such points.
\end{abstract}


\maketitle

\section*{Introduction}
Let~$K/\QQ$ be a number field, let~$S$ be a finite set of places
of~$K$, and let~$1\ge\ve>0$. An element $x\in K$ is said to be
\emph{quasi-$(S,\ve)$-integral} if
\begin{equation}
  \label{eqn:quasiSint}
  \sum_{v\in S}\frac{[K_v:\QQ_v]}{[K :\QQ]}\log^+|x|_v
  \ge \ve h(x).
\end{equation}
We observe that~$x$ is in the ring of~$S$-integers of~$K$ if and only
if it is quasi-$(S,1)$-integral, in which case~\eqref{eqn:quasiSint}
is an equality by definition of the height.
\par
Let $\f(z)\in K(z)$ be a rational function of degree~$d\ge2$,
let~$\a\in K$ be a point, and let
\[
  \Orbit_\f(\a) = \bigl\{\a,\f(\a),\f^2(\a),\ldots\bigr\}
\]
denote the forward orbit of~$\a$ under iteration of~$\f$.
The second author proved in~\cite{silverman_duke} that if~$\f^2(z)$ is
not a polynomial, then the orbit~$\Orbit_\f(\a)$ contains only
finitely many quasi-$(S,\ve)$-integral points. More generally,
if $\#\Orbit_\f(\a)=\infty$ and
if~$\b$ is not an exceptional point for~$\f$, then there
are only finitely many~$n\ge1$ such that 
\[
  \frac{1}{\f^n(\a)-\b}
\]
is quasi-$(S,\ve)$-integral. In this note we give an upper bound for
the number of such~$n$,  making explicit the dependence
on~$S$,~$\f$,~$\a$, and~$\b$. More precisely, we prove that the number
of elements in the set
\begin{equation}
  \label{eqn:nge0fnab}
  \bigl\{n\ge0 : 
  \text{$\bigl(\f^n(\a)-\b\bigr)^{-1}$ is quasi-$(S,\ve)$-integral} \bigr\}
\end{equation}
is smaller than
\begin{equation}
  \label{eqn:4Sgldhfba}
   4^{\#S}\g
      + \log^+_d\left(\frac{h(\f) +  \hhat_\f(\b)}{\hhat_\f(\a)}\right),
\end{equation}
where~$\g$ depends only on~$d$,~$\ve$, and~$[K:\QQ]$.  (See
Section~\ref{subsec:dynamicsheightfunction} for the definitions of the
height~$h(\f)$ of the map~$\f$ and the canonical height~$\hhat_\f$.)
Our main result, Theorem~\ref{thm:main-theorem} in
Section~\ref{sec:main-theorem}, is a strengthened version of this
statement.

The specific form of the upper bound in~\eqref{eqn:4Sgldhfba} is
interesting, especially the dependence on the wandering point~$\a$ and
the target point~$\b$. For example, if~$\hhat_\f(\a)$ is sufficiently
large (depending on~$\b$ and~$\f$), then the bound is independent
of~$\a$,~$\b$, and~$\f$. It is also interesting to ask whether it is
possible, for a given~$\f$ and~$\a$, to make the
set~\eqref{eqn:nge0fnab} arbitrarily large by varying~$\b$.  We
discuss this question further in Remark~\ref{rmk:dynamical_langconj}.

We briefly describe the organization of the paper. We start in
Section~\ref{section:preliminarynotation} by setting notation and
proving an elementary estimate for the chordal metric.
Section~\ref{subsec:dynamicsheightfunction} is devoted to height
functions, both the canonical height associated to a rational map and
various results relating heights and polynomials. In Section~\ref{subsec:dist}
we prove a uniform version of the inverse function theorem for
rational maps of degree~$d$. Section~\ref{subsec:distandroth} states
an estimate for the ramification of the iterate of a rational
function, taken from~\cite{silverman_duke,silverman_book07}, and a
quantititative version of Roth's theorem, taken
from~\cite{silverman_crelles87}. In Section~\ref{sec:main-theorem} we
combine the preliminary material to prove our main theorem.  Finally,
in Section~\ref{sec:bndintpts}, we use the main theorem to give an
explicit upper bound for the number of $S$-integral points in an
orbit.

\begin{remark}
The original paper on finiteness of quasi-$S$-integral points in
orbits~\cite{silverman_duke} has been used by Patrick Ingram and the
second author~\cite{ingram-silverman} to prove a dynamical version of
the classical Bang--Zsigmondy theorem on primitive
divisors~\cite{bang,zsigmondy}.  It has also been used by Felipe
Voloch and the second author~\cite{silverman-voloch09} to prove a
local--global criterion for dynamics on~$\PP^1$.  The quantitative
results proven in the present paper should enable one to prove
quantitative versions of both~\cite{ingram-silverman}
and~\cite{silverman-voloch09}, but we have not included these
applications in this paper in order to keep it to a manageable length.
\end{remark}

\begin{remark}
Quantitative estimates similar to those in this paper have been proven
for the number of integral points on elliptic curves and on certain
other types of curves.  See for example~\cite{gross-silverman95}
and~\cite{silverman_crelles87}.
\end{remark}

\begin{acknowledgement}
The first author would like to thank his coauthor and the Department
of Mathematics at Brown University for their hospitality during his
visit when this work was initiated. The second author would like to
thank Microsoft Research New England for inviting him to be a visiting
researcher.
\end{acknowledgement}

\section{Preliminary Material and Notation}
\label{section:preliminarynotation}

We set the following notation:

\begin{notation}
\setlength{\itemsep}{3pt}
\item[$K$]
a number field;
\item[$M_K$]
the set of places of $K$;
\item[$M_K^\infty$]
the set of archimedean (infinite) places of $K$;
\item[$M_K^0$]
the set of nonarchimedean (finite) places of $K$;
\item[$\log^+(x)$]
the maximum of $\log(x)$ and $0$. We write $\log_d^+$ for log base~$d$.
\end{notation}
\medskip

For each $v \in M_K$, we let $|\,\cdot\,|_v$ denote the corresponding
normalized absolute value on $K$ whose restriction to $\QQ$ gives the
usual $v$-adic absolute value on $\QQ.$ That is, if $v\in
M_K^{\infty}$, then $|x|_v$ is the usual archimedean absolute value,
and if $v\in M_K^0$, then $|x|_v = |x|_p$ is the usual $p$-adic
absolute value for a unique prime $p$. We also write $K_v$ for the
completion of $K$ with respect to $|\,\cdot\,|_v$, and we let $\CC_v$
denote the completion of an algebraic closure of~$K_v$.

For each $v\in M_K$, we let $\r_v$ denote the \emph{chordal metric}
defined on $\PP^1(\CC_v)$, where we recall that for $[x_1, y_1], [x_2,
y_2]\in \PP^1(\CC_v)$,
\[
  \r_v\bigl([x_1, y_1], [x_2, y_2]\bigr) 
  = \begin{cases}
    \dfrac{|x_1y_2- x_2
      y_1|_v}{\sqrt{|x_1|_v^2+ |y_1|_v^2}\sqrt{|x_2|_v^2+|y_2|_v^2}} &
    \text{\!if $v\in M_K^{\infty}$,}
    \\[3\jot]
    \dfrac{|x_1y_2- x_2
      y_1|_v}{\max\{|x_1|_v,|y_1|_v\}\max\{|x_2|_v, |y_2|_v\}} &
    \text{\!if $v\in M_K^0$.}
  \end{cases}
\]

In this paper, we use the logarithmic version of the chordal metric
to measure the distance between points in $\PP^1(\CC_v).$ 

\begin{definition}
The \emph{logarithmic chordal metric function}
\[
  \l_v : \PP^1(\CC_v)\times \PP^1(\CC_v) \to \RR \cup \{\infty\}
\]
is defined by 
\[
 \l_v\bigl([x_1, y_1], [x_2, y_2]\bigr) 
  = - \log \r_v\bigl([x_1, y_1], [x_2, y_2]\bigr).
\]
\end{definition} 

Notice that $\l_v(P, Q) \ge 0$ for all $P, Q\in \PP^1(\CC_v)$, and
that two points $P, Q \in \PP^1(\CC_v)$ are close if and only if
$\l_v(P, Q)$ is large. We also observe that~$\l_v$ is a particular
choice of an \emph{arithmetic distance function} as defined
in~\cite[\S3]{silverman_mathann87}, i.e., it is a local height
function $\l_{\PP^1\times\PP^1,\D}$, where~$\D$ is the diagonal
of~$\PP^1\times\PP^1$.

The next lemma relates the logarithmic chordal metric $\l_v(x,y)$ to
the usual metric $|x - y|_v$ arising from the absolute value~$v$.

\begin{lemma}
\label{lemma:log_chordal}
Let $v\in M_K$  and let $\l_v$ be the logarithmic chordal
metric on $\PP^1(\CC_v).$ Define $\ell_v = 2$ if $v$ is archimedean, and
$\ell_v = 1$ if $v$ is non-archimedean. Then for  $x, y \in  \CC_v$ 
we have
\begin{multline*}
  \l_v(x, y) > \l_v(y, \infty) + \log \ell_v \\
  \Longrightarrow\quad
  \l_v(y, \infty) \le \l_v(x, y)+ \log |x - y|_v  \le 2\l_v(y, \infty)
     + \log \ell_v. 
\end{multline*}
\end{lemma}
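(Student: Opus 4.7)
The plan is to rewrite all three quantities in the displayed inequality in terms of the distances from~$x$ and~$y$ to~$\infty$. Directly from the definitions of the chordal metric and of $\l_v$, I would first verify the identity
\[
  \l_v(x, y) + \log|x - y|_v = \l_v(x, \infty) + \l_v(y, \infty)
  \qquad\text{for all $x, y \in \CC_v$.}
\]
This is a short calculation: for $v$ archimedean one has $\l_v(z, \infty) = \tfrac12\log(|z|_v^2 + 1)$ while $\l_v(x, y) = -\log|x-y|_v + \tfrac12\log(|x|_v^2+1) + \tfrac12\log(|y|_v^2+1)$, and the analogous formulas hold for $v$ non-archimedean with $\sqrt{|z|_v^2+1}$ replaced by $\max(|z|_v, 1)$.

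Using this identity, the conclusion of the lemma is equivalent to the pair of inequalities
\[
  0 \le \l_v(x, \infty) \le \l_v(y, \infty) + \log \ell_v.
\]
The lower bound is immediate since $\l_v$ is nonnegative. For the upper bound I would invoke the generalized triangle inequality for the chordal metric,
\[
  \r_v(P_1, P_3) \le \ell_v \max\bigl(\r_v(P_1, P_2),\, \r_v(P_2, P_3)\bigr),
\]
which is the usual triangle inequality combined with $a + b \le 2\max(a,b)$ when $v$ is archimedean, and the ultrametric inequality when $v$ is non-archimedean. Taking $-\log$ converts this to
\[
  \l_v(P_1, P_3) \ge \min\bigl(\l_v(P_1, P_2),\, \l_v(P_2, P_3)\bigr) - \log \ell_v.
\]
Applying this with $(P_1, P_2, P_3) = (y, x, \infty)$, the hypothesis $\l_v(x, y) > \l_v(y, \infty) + \log \ell_v$ rules out the possibility that $\l_v(x, y)$ attains the minimum, so the minimum must equal $\l_v(x, \infty)$, yielding $\l_v(x, \infty) \le \l_v(y, \infty) + \log \ell_v$.

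There is no serious obstacle here; the work is essentially bookkeeping. The only points requiring care are the uniform verification of the key identity across the archimedean and non-archimedean cases, and tracking the constant $\ell_v$ correctly in the two forms of the triangle inequality so that the final bound has the sharp constant stated in the lemma.
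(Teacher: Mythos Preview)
Your proposal is correct and follows essentially the same route as the paper: both arguments rest on the identity $\l_v(x,y)+\log|x-y|_v=\l_v(x,\infty)+\l_v(y,\infty)$ and then bound $\l_v(x,\infty)$ by $\l_v(y,\infty)+\log\ell_v$ via a triangle-type inequality. The only notable difference is organizational: the paper splits into cases, citing \cite[Lemma~3.53]{silverman_book07} for the archimedean upper bound and arguing by contradiction from the ultrametric equality case for the non-archimedean one, whereas you handle both uniformly through the single inequality $\r_v(P_1,P_3)\le\ell_v\max\bigl(\r_v(P_1,P_2),\r_v(P_2,P_3)\bigr)$, which is a mild streamlining.
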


\begin{proof}
Notice that by the definition of chordal metric,
\[
  \l_v(x, y) = \l_v(x, \infty) + \l_v(y, \infty) - \log|x-y|_v. 
\]
Therefore,
\[
   \l_v(x, y)+ \log |x - y|_v  = \l_v(x, \infty) + \l_v(y, \infty) \ge
   \l_v(y, \infty).  
\]
This gives the lower bound for the sum $\l_v(x, y)+ \log |x - y|_v.$

For the upper bound, if $v$ is an archimedean place, then the
assertion is the same as \cite[Lemma 3.53]{silverman_book07}. We will
not repeat the proof here.  For the case where $v$ is non-archimedean,
notice that $\l_v$ satisfies the strong triangle inequality, 
\[
 \l_v(x, y) \ge \min\left(\l_v(x,z), \l_v(y,z) \right),  
\]
and that this inequality is an equality if $\l_v(x, z) \ne \l_v(y, z)$.
Suppose that $x$ and $y$ satisfy the required condition in the statement of
the lemma, i.e., $\l_v(x,y) > \l_v(y, \infty)$. (Notice that $\ell_v = 1$
in this case.) 
We claim that $\l_v(x, \infty) \le \l_v(y, \infty)$. Assume to the
contrary that $\l_v(x, \infty) > \l_v(y, \infty)$. Then, by the strong
triangle inequality for $\l_v$ we have
\[
  \l_v(x,y) = \min\left(\l_v(x, \infty),\l_v(y, \infty)\right)
   = \l_v(y, \infty). 
\]
But this contradicts the assumption that $\l_v(x,y) > \l_v(y, \infty)$
hence the claim. Now,
\begin{align*}
  \l_v(x, y) +\log |x - y|_v  & = \l_v(x,\infty) + \l_v(y, \infty)  \\*
    & \le 2 \l_v(y,\infty) \quad \text{by the claim,}
\end{align*}
which completes the proof of the lemma.
\end{proof}

\section{Dynamics and  height functions}
\label{subsec:dynamicsheightfunction}

Let $\f : \PP^1\to \PP^1$ be a rational map on $\PP^1$ of
degree $d \ge 2$ defined over the number field $K$.  We identify $K\cup
\{\infty\} \simeq \PP^1(K)$ by fixing an affine coordinate $z$ on
$\PP^1$, so $\a\in K$ equals $[\a, 1] \in \PP^1(K)$, and the point at
infinity is $[1, 0]$. With respect to this affine coordinate, we
identity rational maps $\f:\PP^1\to\PP^1$ with rational functions
$\f(z) \in K(z)$.

Let $P\in \PP^1$. Then, the (\emph{forward}) \emph{orbit} of $P$ under
iteration of $\f$ is the set
\[
    \Orbit_\f(P) = \bigl\{\f^n(P) : n = 0, 1, 2, \ldots\bigr\}.
\]
The point $P$ is called a \emph{wandering point} of $\f$ if
$\Orbit_\f(P)$ is an infinite set; otherwise, $P$ is called a
\emph{preperiodic point} of $\f$. The set of preperiodic points of
$\f$ is denoted by $\PrePer(\f)$.  We say that a point $A\in \PP^1$ is
an \emph{exceptional point} if it is preperiodic and
$\f^{-1}\bigl(\Orbit_\f(A)\bigr)=\Orbit_\f(A)$, which is equivalent to
the assumption that the complete (forward and backward) $\f$-orbit
of~$A$ is a finite set.  It is a standard fact that $A$ is an
exceptional point for~$\f$ if and only if $A$ a totally ramified fixed
point of $\f^2$. (One direction is clear, and the other follows from
that fact~\cite[Theorem~1.6]{silverman_book07} that if~$A$ is an
exceptional point, then~$\Orbit_\f(A)$ consists of at most two points.)

For a point $P = [x_0, x_1] \in \PP^1(K)$, the
\emph{height} of~$P$ is
\[
  h(P)  = \sum_{v\in M_K}\; \frac{[K_v:\QQ_v]}{[K:\QQ]}
     \log\max\bigl(|x_0|_v, |x_1|_v\bigr).
\]
Then the \emph{canonical height} of $P$ relative to the rational map
$\f$ is given by the limit
\[  
 \hhat_{\f}(P) = \lim_{n\to \infty}\, \frac{h(\f^n P)}{d^n}.
\]
To simplify notation, we let
\[
  d_v = \frac{[K_v:\QQ_v]}{[K :\QQ]}.
\]
\par
Using the definition of $\l_v$, we see that 
\[
  h(P)=\sum_{v\in M_K} d_v\l_v(P, \infty)+O(1).
\]
More precisely, writing $P=[x_0,x_1]$ and~$\infty=[1,0]$, we have
\[
  h(P) = \sum_{v\in M_K^0} d_v\l_v(P,\infty)  
  + \sum_{v\in M_K^\infty} d_v
      \log\left(\frac{\max\bigl\{|x_0|_v,|x_1|_v\bigr\}}
                     {\sqrt{|x_0|_v^2+|x_1|_v^2}}\right).
\]
The quantity $\max\{a,b\}/\sqrt{a^2+b^2}$ is between~$1/\sqrt2$ and~$1$
for all non-negative~$a,b\in\RR$, so
\[
  -\frac12\log2 \le h(P) - \sum_{v\in M_K} d_v\l_v(P,\infty)  \le 0.
\]
For further material and basic properties of height functions, see for
example~\cite[\S\S3.1--3.5]{silverman_book07}.

For a polynomial $f = \sum a_i z^i \in K[z]$ and absolute value $v\in
M_K$, we define
\[
  |f|_v = \max\bigl\{|a_i|_v\bigr\}\quad \text{and}\quad
  h(f) = h\bigl([\ldots, a_i, \ldots]\bigr)
   =  \sum_{v\in M_K}\, d_v \log |f|_v. 
\] 
We say that a rational function~$\f(z)=f(z)/g(z)\in K(z)$ of
degree~$d$ is written in normalized form if
\[
  f(z) = \sum_{i=0}^d\, a_i z^i \quad\text{and}\quad
  g(z) = \sum_{i=0}^d\, b_i z^i
  \quad \text{with} \quad  a_i, b_i\in K,
\]
if $a_d$ and $b_d$ are not both zero, and if $f$ and $g$
are relatively prime in~$K[z]$. For $v\in M_K$, we set $|\f|_v =
\max\{|f|_v, |g|_v\}$, and then the height of $\f$ is defined by
\begin{align*}
  h(\f) & = h\bigl([a_0, \ldots, a_d, b_0, \ldots, b_d]\bigr)  \\
  & = \sum_{v\in M_K} d_v \log |\f|_v . 
\end{align*}
Directly from the definitions, we have
\begin{equation}
  \label{ineq:function-height}
   \max\bigl(h(f), h(g) \bigr)  \le h(\f). 
\end{equation}

The following basic properties of absolute values of polynomials will
be useful.

\begin{lemma}
\label{lemma:absval-poly}
Let $v\in M_K$ and let $f, g \in K[x]$ be  polynomials with
coefficients in $K$.
\begin{parts}
\Part{(a)}
\[
  |f + g|_v \le
  \smash[t]{
    \begin{cases}
      |f|_v + |g|_v & \text{if $v$ is archimedean,}  \\
      \max\{|f|_v, |g|_v\} & \text{if $v$ is nonarchimedean.}
    \end{cases}
  }
\]
\Part{(b)} \textup{(Gauss' Lemma)}
If $v$ is nonarchimedean, then  $|f g |_v = |f|_v |g|_v$.
\Part{(c)}
If $v$ is archimedean and $\deg f + \deg g < d$, then 
\[
\frac{1}{4^d} |f g|_v \le |f|_v |g|_v \le 4^d |f g|_v 
\]
\end{parts}

\begin{proof}
(a) follows from the definition.  For~(b) and~(c), see for example
\cite[Chapter~3, Propositions~2.1 and~2.3]{lang83}.
\end{proof}
\end{lemma}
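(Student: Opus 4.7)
The plan is to treat the three parts separately. Parts (a) and (b) are essentially immediate from the definition of the Gauss norm, while part (c) is the archimedean analogue of Gauss' lemma and is where the real work lies.

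Part (a) follows at once from the triangle inequality (archimedean case) or the strong triangle inequality (non-archimedean case) applied to each coefficient of $f+g$, and then taking the maximum over all coefficients.

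For part (b), the bound $|fg|_v \le |f|_v|g|_v$ is immediate from the strong triangle inequality applied to each convolution coefficient $\sum_{i+j=k} a_i b_j$ of the product. For the reverse direction, I would rescale so that $|f|_v = |g|_v = 1$, which is equivalent to saying all coefficients of $f$ and $g$ lie in the valuation ring $\gO_v$. Letting $i_0, j_0$ be the smallest indices for which $a_{i_0}, b_{j_0}$ are units in $\gO_v$, every summand in the coefficient $\sum_{i+j = i_0+j_0} a_i b_j$ of $z^{i_0+j_0}$ in $fg$ other than $a_{i_0}b_{j_0}$ lies in the maximal ideal of $\gO_v$; the strong triangle inequality then forces that coefficient to be a unit, giving $|fg|_v \ge 1$.

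For part (c), the inequality $|fg|_v \le 4^d |f|_v|g|_v$ is routine, since each coefficient of $fg$ is a sum of at most $d+1$ terms of size $\le |f|_v|g|_v$. The substantive assertion is the reverse bound $|f|_v|g|_v \le 4^d |fg|_v$, for which I would invoke the Mahler measure $M(f) = |a_{\deg f}|_v \prod_{\alpha} \max\{1,|\alpha|_v\}$ (with $\alpha$ ranging over the roots of $f$ in $\CC_v$). Three standard facts do the job: (i) $|f|_v \le 2^{\deg f} M(f)$ via the coefficient-from-roots bound (expressing each $a_i$ as an elementary symmetric function in the roots times the leading coefficient), (ii) $M(f) \le \sqrt{\deg f + 1}\,|f|_v$ (Landau's inequality, e.g.\ via a Jensen-formula computation), and (iii) the multiplicativity $M(fg) = M(f)M(g)$. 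Chaining these yields
\[
|f|_v|g|_v \le 2^{\deg f + \deg g} M(fg) \le 2^{\deg f + \deg g}\sqrt{\deg(fg)+1}\,|fg|_v,
\]
and the hypothesis $\deg f + \deg g < d$ lets one absorb the prefactor into $4^d$.

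The main obstacle is the archimedean lower bound in part (c): keeping the constants clean enough to land at the advertised $4^d$ is really a bookkeeping exercise, and one can follow the presentation in Chapter~3 of Lang's \emph{Fundamentals of Diophantine Geometry}, as cited in the lemma, for the sharp form.
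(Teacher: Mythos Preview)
Your proposal is correct. The paper's own proof is considerably less detailed than yours: it dispatches (a) with ``follows from the definition'' and for (b) and (c) simply cites Lang's \emph{Fundamentals of Diophantine Geometry}, Chapter~3, Propositions~2.1 and~2.3, without further argument. Your sketches for (b) (the minimal-index argument for Gauss' lemma) and (c) (the Mahler-measure chain $|f|_v \le 2^{\deg f}M(f)$, $M(fg)=M(f)M(g)$, $M(f)\le\sqrt{\deg f+1}\,|f|_v$) are exactly the standard proofs one finds in Lang, so you have effectively unpacked the citation. Your constant-tracking in (c) is fine: with $\deg f+\deg g<d$ one gets a prefactor $2^{d-1}\sqrt{d}$, comfortably below~$4^d$.
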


\begin{proposition}
\label{prop:polynomial-height-ineq}
Let $\{f_1, \ldots, f_r\}$ be a collection of polynomials in
the ring
$K[x]$.%
\begin{parts}
\Part{(a)}
$\displaystyle
  \begin{aligned}[t]
      h(f_1f_2\cdots f_r) &\le \sum_{i=1}^r \bigl(h(f_i) + (\deg f_i +1)
      \log 2\bigr)  \\*
    & \le r \max_{1\le i \le r}\, \bigl\{h(f_i) + (\deg f_i +1)\log 2\bigr\}.
  \end{aligned}
$
\Part{(b)}
$\displaystyle
  h(f_1+f_2 + \cdots + f_r) \le \sum_{i=1}^r h(f_i) + \log r.
$
\Part{(c)}
Let $\f(z),\psi(z)\in K(z)$ be rational functions. 
Then
\[
  h(\f\circ \psi) \le h(\f) + (\deg\f )h(\psi) + (\deg\f)(\deg\psi) \log 8.
\]
\Part{(d)}
Let $\f(z)\in K(z)$ be a rational function of degree~$d\ge2$.Then for
all $n\ge 1$ we have
\[
 h(\f^n) \le \left(\frac{d^n- 1}{d-1}\right) h(\f)  +
 d^2\left(\frac{d^{n-1}- 1}{d-1}\right) \log 8. 
\]
\end{parts}
\end{proposition}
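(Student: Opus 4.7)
The plan is to prove the four parts in order: (a) and (b) are local-to-global estimates derived from Lemma \ref{lemma:absval-poly}, (c) is obtained by explicit expansion of the composition using (a) and (b), and (d) follows by induction from (c).

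For (a), I would work place by place. At each nonarchimedean $v$, iterated Gauss's lemma (Lemma \ref{lemma:absval-poly}(b)) gives the exact identity $|f_1 \cdots f_r|_v = \prod_i |f_i|_v$, contributing no error. At each archimedean $v$, the coefficient-convolution bound yields $|f_1 \cdots f_r|_v \le \bigl(\prod_i(\deg f_i + 1)\bigr)\prod_i |f_i|_v$; combining this with the crude estimate $\deg f_i + 1 \le 2^{\deg f_i + 1}$ and summing over places (using $\sum_{v\in M_K^\infty} d_v = 1$) delivers the claimed bound. Part (b) is analogous: the local triangle inequality gives $|\sum_i f_i|_v \le r_v \max_i |f_i|_v$ with $r_v = r$ at archimedean $v$ and $r_v = 1$ otherwise, and summing with the $d_v$ weights yields $h(F) \le \log r + \sum h(f_i)$.

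For (c), I would expand the composition explicitly. Writing $\f = f/g$ in normalized form with $d = \deg\f$ and $\psi = p/q$ with $d' = \deg\psi$, one has
\[
\f \circ \psi \;=\; \frac{\sum_{i=0}^{d} a_i\, p^i q^{d-i}}{\sum_{i=0}^{d} b_i\, p^i q^{d-i}},
\]
so both numerator and denominator are polynomials in $z$ of degree at most $dd'$. Part (a) bounds the height of each product $p^i q^{d-i}$ (a product of up to $d$ polynomial factors of individual degree at most $d'$) by $i\, h(p) + (d-i)\, h(q)$ plus a correction of order $dd'\log 2$; part (b) applied to the sum of $d+1$ terms adds a further $\log(d+1)$ and feeds in the $h(\f)$ contribution through the scalar coefficients $a_i, b_i$. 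Combining the numerator and denominator bounds, and using $h(p), h(q) \le h(\psi)$, produces the claimed estimate.

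For (d), the plan is induction on $n$, applying (c) with $\psi = \f^{n-1}$ (so $\deg\psi = d^{n-1}$):
\[
h(\f^n) \le h(\f) + d\, h(\f^{n-1}) + d \cdot d^{n-1} \log 8.
\]
Unwinding this linear recurrence produces the geometric-sum coefficients $\frac{d^n - 1}{d-1}$ on $h(\f)$ and $d^2 \cdot \frac{d^{n-1}-1}{d-1}$ on $\log 8$, as claimed. The main bookkeeping challenge will be in (c): the multiple applications of (a) and (b) introduce several $\log 2$ corrections that must combine exactly into $dd'\log 8$, and tracking the scalar factors $a_i, b_i$ so that their heights contribute $h(\f)$ and not more is the delicate step.
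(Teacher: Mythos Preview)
Your outline for (a)--(c) matches the paper's approach closely; the paper also works place by place, using Gauss's lemma at finite places and a crude counting bound at archimedean places, and for (c) it expands the composition and estimates $\bigl|\sum a_i\psi_0^i\psi_1^{d-i}\bigr|_v$ directly rather than formally citing (a) and (b), but the substance is the same.

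There is, however, a genuine error in your plan for (d). You apply (c) with the \emph{outer} map equal to $\f$ and $\psi=\f^{n-1}$, obtaining
\[
  h(\f^n)\le h(\f)+d\,h(\f^{n-1})+d^n\log 8.
\]
Unwinding this recurrence does give $\frac{d^n-1}{d-1}h(\f)$ for the first term, but the $\log 8$ contribution telescopes to $(n-1)d^n\log 8$, not to $d^2\frac{d^{n-1}-1}{d-1}\log 8$ as you claim. (Already at $n=3$ one gets $2d^3$ versus the target $d^3+d^2$.) The asymmetry in (c) between $\f$ and $\psi$ matters here: the $h(\psi)$ term carries a factor $\deg\f$, so putting the iterate on the inside multiplies the inductive hypothesis by $d$ at every step and the fixed error $d^n\log 8$ never shrinks. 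The paper instead writes $\f^{n+1}=\f^n\circ\f$ and applies (c) with the \emph{iterate as the outer map} and $\psi=\f$, yielding
\[
  h(\f^{n+1})\le h(\f^n)+d^n h(\f)+d^{n+1}\log 8,
\]
which sums to the stated geometric series in both terms. Swapping the roles fixes your argument.
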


\begin{proof}
The proofs of (a) and (b) can be found in
\cite[Proposition~B.7.2]{hindry-silverman00}, where the proposition is
stated for multi-variable polynomials. As we'll use the arguments in
(a) for the proof of (c), we repeat the proof of~(a) for the
one-variable case. (Also, our situation is slightly different from
\cite{hindry-silverman00}, since we are using a projective height,
while \cite{hindry-silverman00} uses an affine height.)  Writing $f_i
= \sum_E a_{iE} X^E$, we have
\[
  f_1 \cdots f_r = \sum_E \left(\sum_{e_1+\cdots +e_r = E}\,
  a_{1e_1}\cdots a_{r e_r} \right) X^E,
\]
and hence for $v \in M_K$,
\begin{equation}
  \label{eqn:f1frmaxE}
  |f_1\cdots f_r|_v 
  = \max_E \left|\sum_{e_1+\cdots +e_r =  E}
      \,a_{1e_1}\cdots a_{r e_r}\right|_v 
\end{equation}
and
\[
  h(f_1\cdots f_r) = \sum_{v\in M_K} \,d_v \log |f_1\cdots f_r|_v.
\]
If $v$ is nonarchimedean, then by Gauss' Lemma
(Lemma~\ref{lemma:absval-poly}(b)) we have
\[
  |f_1\cdots f_r|_v = \prod_{i=1}^r \, |f_i|_v .
\]
\par
It remains to deal with archimedean place $v$.  We note that the
number of terms in the sum appearing in the right-hand side
of~\eqref{eqn:f1frmaxE} is $\binom{E+r - 1}{E}$.
Hence
\begin{align*}
  |f_1\cdots f_r|_v 
  &\le  \max_E \left(\binom{E+r-1}{E} \max_{e_1+\cdots +e_r =E}
    |a_{1e_1}\cdots a_{r e_r}|_v \right) \\
  &\le  \max_E \left(2^{E+r-1} \max_{e_1+\cdots +e_r =E}
    |a_{1e_1}\cdots a_{r e_r}|_v \right).
\end{align*}
Further, if $E>\deg(f_1\dots f_r)$, then the product $a_{1e_1}\cdots
a_{r e_r}$ is zero , since in that case at least one of the $a_{ij}$ is
zero. Hence
\begin{equation}
  \label{ineq:archim-ineq}
  |f_1\cdots f_r|_v 
  \le  2^{\deg(f_1\cdots f_r) + r - 1} \prod_{i=1}^r \, |f_i|_v .
\end{equation}
Let $N_v = 2^{\sum_i(\deg f_i + 1)}$ if $v$ is archimedean, and $N_v =
1$ if $v$ is non-archimedean. Then we compute
\begin{align*}
  h(f_1 \cdots f_r) & = \sum_{v\in M_K}\,d_v\log |f_1\cdots f_r|_v  \\
  & \le \sum_{v\in M_K}\, d_v \left(\log N_v + \log \prod_{i=1}^r
   |f_i|_v\right)  \\
  & \le \sum_{i=1}^r \left(h(f_i) + (\deg f_i + 1)\log 2\right)  \\
  & \le r \max_{1\le i \le r}\, \{h(f_i) + (\deg f_i + 1)\log 2\},
\end{align*}
which completes the proof of~(a).
\par
Next we give a proof of (c).  Write $\psi = \psi_0/\psi_1 \in K(z)$
in normalized form, so in particular $\psi_0$ and $\psi_1$ are
relatively prime polynomials. Then
\[
  (\f\circ\psi) (z)
   = \frac{\sum a_i \psi_0^i \psi_1^{d-i}}{\sum b_i \psi_0^i \psi_1^{d-i}},
\]
so by definition of the height of a rational function we have 
\[
  h(\f\circ\psi) 
  \le \sum_{v\in M_K}\, d_v 
    \log \max\left\{\left|\sum a_i \psi_0^i\psi_1^{d-i}\right|_v, 
         \left|\sum b_i \psi_0^i \psi_1^{d-i}\right|_v\right\}. 
\]
For the right hand side of the above inequality, 
if $v$ is nonarchimedean, then by  Gauss' Lemma again we have 
\[
  \left|\sum a_i \psi_0^i\psi_1^{d-i}\right|_v 
   \le \max\bigl(|f|_v |\psi_0|_v^i |\psi_1|_v^{d-i}\bigr) 
   \le |\f|_v |\psi|_v^d. 
\]
Similarly,
\[
  \left|\sum b_i \psi_0^i\psi^{d-i}\right|_v \le |\f|_v |\psi|_v^d . 
\]
Hence for $v$ nonarchimedean,
\[
|\f\circ\psi|_v \le |\f|_v |\psi|_v^d . 
\]
\par
Next let $v$ be an archimedean place of $K$. Then
the triangle inequality gives
\[
  \left|\sum a_i \psi_0^i\psi_1^{d-i}\right|_v
 \le (d+1) |f|_v \max_i \bigl\{|\psi_0^i\psi_1^{d-i}|_v\bigr \}.
\]
Applying the estimate (\ref{ineq:archim-ineq}) to the product
$\psi_0^i \psi_1^{d-i}$ yields
\[
  |\psi_0^i \psi_1^{d-i}|_v 
  \le 2^{d(\deg\psi + 1)} |\psi_0|_v^i |\psi_1|_v^{d-i} 
  \le 2^{d(\deg\psi + 1)} |\psi|_v^d.
\]
Therefore,
\[
  \left|\sum a_i \psi_0^i\psi_1^{d-1}\right|_v   
  \le (d+1)2^{d(\deg\psi + 1)}  |f|_v  |\psi|_v^d
  \le (d+1)2^{d(\deg\psi + 1)}  |\f|_v  |\psi|_v^d. 
\]
\par
Similarly,
\[
  \left|\sum b_i \psi_0^i\psi_1^{d-1}\right|_v   
  \le (d+1)2^{d(\deg\psi + 1)}  |\f|_v  |\psi|_v^d. 
\]
\par  
We combine these estimates. To ease notation, we let $N_v = 1$ for $v$
nonarchimedean and $N_v = (d+1)2^{2d\deg\psi} = (d+1)4^{\deg\f\deg\psi}$
for $v$ archimedean. Then
\begin{align*}
  h(\f\circ \psi)  
  & \le  \sum_{v\in M_K}\, d_v \log \max\left\{\left|\sum a_i
  \psi_0^i  \psi_1^{d-1}\right|_v, 
   \left|\sum b_i \psi_0^i \psi_1^{d-1}\right|_v\right\}
  \\
  & \le \sum_{v\in M_K}\, d_v 
    \bigl(\log |\f|_v  + d \log |\psi|_v + \log N_v\bigr)
  \\
  & \le h(\f) + d h(\psi) + (\deg\f) (\deg\psi) \log 4  + \log (d+1)
  \\
  & \le  h(\f) + d h(\psi) +  (\deg\f) (\deg\psi) \log 8,
\end{align*}
since $d + 1 \le 2^d \le 2^{d \deg\psi}$.
This completes the proof of (c). 
\par
Finally, we prove~(d) by induction on~$n$. The stated inequality is clearly
true for~$n=1$. Assume now it it true for~$n$. Then
\begin{align*}
  h(\f^{n+1})
  &\le h(\f^n) + d^nh(\f) + d^{n+1}\log 8
    &&\text{from (c) applied to $\f^n$ and $\f$,} \\
  &\le \smash[b]{\left(\frac{d^n- 1}{d-1}h(\f)+d^2\frac{d^{n-1}-
        1}{d-1}\log 8\right)} 
        + d^n h(\f) + d^{n+1}\log 8 \hidewidth\\
  &  &&\text{from the induction hypothesis,} \\
  &= \left(\frac{d^{n+1}- 1}{d-1}\right)h(\f) + d^2\left(\frac{d^n-
      1}{d-1}\right)\log 8.\hidewidth
\end{align*}
This completes the proof of Proposition~\ref{prop:polynomial-height-ineq}.
\end{proof}

The following facts about height functions are well-known.

\begin{proposition}
\label{prop:heightproperty}
Let $\f : \PP^1 \to \PP^1$ be a rational map of degree $d\ge2$ defined
over $K$.  There are constants $c_1$,~$c_2$,~$c_3$, and $c_4$,
depending only on $d$, such that the following estimates hold for all
$P \in \PP^1(\Kbar)$.
\begin{parts}
\Part{(a)} 
$\bigl|h(\f(P)) - d h(P)\bigr| \le c_1 h(\f) + c_2$.
\Part{(b)}
$ \bigl| \hhat_\f(P) - h(P)\bigr| \le c_3 h(\f) + c_4$.
\Part{(c)}
$\hhat_\f(\f(P)) = d \hhat_\f(P)$.
\Part{(d)}
$P \in \PrePer(\f)$ if and only if $\hhat_\f(P) = 0$.
\end{parts}
\end{proposition}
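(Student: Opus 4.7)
The plan is to establish the four parts in the order (a), (c), (b), (d), since each relies on the previous ones, and the dependence of all implicit constants on~$d$ (not on~$\f$) can be tracked through the standard polynomial estimates from Proposition~\ref{prop:polynomial-height-ineq}.

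For part (a), I would write $\f=f/g$ in normalized form with $\deg f,\deg g\le d$ and $\gcd(f,g)=1$, and split the inequality into two one-sided estimates at each place~$v$. The upper bound $h(\f(P))\le dh(P)+c_1h(\f)+c_2$ is the easy direction: for $P=[x_0,x_1]$ we have
\[
   \f(P) = \bigl[\,F(x_0,x_1)\,,\,G(x_0,x_1)\,\bigr],
\]
where $F,G$ are the homogenizations of $f,g$, and the standard triangle inequality / Gauss lemma estimates used in the proof of Proposition~\ref{prop:polynomial-height-ineq}(c) give $|F(x_0,x_1)|_v,|G(x_0,x_1)|_v\le C_v|\f|_v\max(|x_0|_v,|x_1|_v)^d$, with $\sum_v d_v\log C_v$ depending only on~$d$. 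The lower bound is the content of the standard argument using the resultant: coprimality of $f,g$ means $R=\Resultant(F,G)\ne0$, and the Nullstellensatz identity
\[
   x_i^{2d-1} = A_i(x_0,x_1)F(x_0,x_1) + B_i(x_0,x_1)G(x_0,x_1)
\]
(with $A_i,B_i$ homogeneous of degree $d-1$ with coefficients bounded polynomially in $R^{-1}$ and the coefficients of $f,g$) forces $\max(|x_0|_v,|x_1|_v)^{2d-1}$ to be controlled by $\max(|F|_v,|G|_v)$ up to a factor involving~$|\f|_v$ and~$|R|_v$; summing over~$v$ and using $\sum_v d_v\log|R|_v=0$ (product formula) yields $dh(P)\le h(\f(P))+c_1h(\f)+c_2$.

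Part (c) is immediate from the definition, because
\[
   \hhat_\f(\f(P)) = \lim_{n\to\infty}\frac{h(\f^{n+1}(P))}{d^n}
   = d\cdot\lim_{n\to\infty}\frac{h(\f^{n+1}(P))}{d^{n+1}} = d\hhat_\f(P).
\]
For part (b), the standard telescoping trick: applying (a) to $\f^n$ gives
\[
   \left|\frac{h(\f^{n+1}(P))}{d^{n+1}} - \frac{h(\f^n(P))}{d^n}\right|
   \le \frac{c_1 h(\f)+c_2}{d^{n+1}},
\]
so the partial sums converge absolutely and
\[
   \bigl|\hhat_\f(P)-h(P)\bigr| \le \sum_{n\ge 0}\frac{c_1h(\f)+c_2}{d^{n+1}}
   = \frac{c_1h(\f)+c_2}{d-1},
\]
which gives $c_3,c_4$ depending only on~$d$.

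Finally, for (d), the ``only if'' direction is immediate: if $P$ is preperiodic, then $\{h(\f^n(P))\}_{n\ge0}$ takes only finitely many values, hence is bounded, and division by $d^n$ forces $\hhat_\f(P)=0$. The converse is the part that requires a separate input, namely Northcott's finiteness theorem: if $\hhat_\f(P)=0$, then by~(c) we have $\hhat_\f(\f^n(P))=0$ for all $n\ge 0$, and by~(b) we get $h(\f^n(P))\le c_3h(\f)+c_4$ for all $n$, so the orbit $\Orbit_\f(P)$ lies in the set of points of $\PP^1(\Kbar)$ of bounded height and bounded degree over~$K$, which is finite; hence~$P$ is preperiodic. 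The main conceptual obstacle is the resultant-based lower bound in (a), which is the only step requiring genuine algebraic input beyond the polynomial height estimates already developed in Proposition~\ref{prop:polynomial-height-ineq}; everything else is formal manipulation.
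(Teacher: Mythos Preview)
Your proposal is correct and is exactly the classical argument; the paper itself does not supply a proof of this proposition but merely cites \cite[\S\S B.2,B.4]{hindry-silverman00} and \cite[\S3.4]{silverman_book07}, where the resultant-based lower bound for~(a), the telescoping sum for~(b), the direct limit computation for~(c), and the Northcott argument for~(d) are carried out just as you outline. One cosmetic point: in your resultant identity it is cleaner to keep~$R$ on the left-hand side (i.e., $R\,x_i^{2d-1}=A_iF+B_iG$ with $A_i,B_i$ having coefficients that are integer polynomials in the~$a_j,b_j$) and then invoke the product formula for~$R\in K^*$ directly, rather than absorbing~$R^{-1}$ into the coefficients.
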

\begin{proof}
See, for example, \cite[\S\S B.2,B.4]{hindry-silverman00}
or \cite[\S3.4]{silverman_book07}.  
\end{proof}

\section{A Distance Estimate}
\label{subsec:dist}

Our goal in this section is a version of the inverse function theorem
that gives explicit estimates for the dependence on the (local) heights of
both the points and the function. 
It is undoubtedly possible to give a direct, albeit long and messy,
proof of the desired result. We instead give a proof using universal
families of maps and arithmetic distance functions. Before stating our
result, we set notation for the universal family of degree~$d$
rational maps on~$\PP^1$.
\par
We write~$\Rat_d\subset\PP^{2d+1}$ for the space of rational maps of
degree~$d$, where we identify a rational map $\f=f/g=\sum
a_iz^i\bigm/\sum b_iz^i$ with the point
\[
  [\f]=[f,g]=[a_0,\ldots,a_d,b_0,\ldots,b_d]\in\PP^{2d+1}.
\]
If $\f\in\Rat_d(\Qbar)$ is defined over~$\Qbar$, we define the height
of~$\f$ as in Section~\ref{subsec:dynamicsheightfunction}
to be the height of the associated point in~$\PP^{2d+1}(\Qbar)$,
\[
  h(\f)=h\bigl([a_0,\ldots,a_d,b_0,\ldots,b_d]\bigr).
\]
\par
Over~$\Rat_d$, there is a universal family of degree~$d$ maps, which we
denote by
\[
  \Psi : \PP^1\times\Rat_d\longrightarrow\PP^1\times\Rat_d,\qquad
  (P,\psi)\longmapsto \bigl(\psi(P),\psi\bigr).
\]
We note that~$\Rat_d$ is the complement in~$\PP^{2d+1}$ of a
hypersurface, which we denote by~$\partial\Rat_d$. (The
set~$\partial\Rat_d$ is given by the resultant $\Resultant(f,g)=0$,
so~$\partial\Rat_d$ is a hypersurface of degree~$2d$.)
Since~$\PP^1$ is complete, we have
\[
  \partial(\PP^1\times\Rat_d) = \PP^1\times\partial\Rat_d.
\]
\par
The map~$\Psi$ is a finite map of degree~$d$.  Let~$R(\Psi)$ denote
its ramification locus. Looking at the behavior of~$\Psi$ in a
neighborhood of a point~$(P,\psi)$, it is easy to see that the
restriction of~$R(\Psi)$ to a fiber~$\PP^1_\psi=\PP^1\times\{\psi\}$
is the ramification divisor of~$\psi$,
\[
  R(\Psi)\big|_{\PP^1_\psi} = R(\psi).
\]
So the ramification indices of the universal map~$\Psi$ and a particular
map~$\psi$ are related by
\begin{equation}
  \label{eqn:ePsi}
  e_{(P,\psi)}(\Psi)=e_P(\psi).
\end{equation}

\begin{proposition}
\label{prop:inv-fun}
Let $\psi \in K(z)$ be a nontrivial rational function, let
\text{$S\subset M_K$} be a finite set of absolute values on~$K$, each
extended in some way to~$\Kbar$, and let \text{$A,P\in\PP^1(K)$}. Then
\begin{multline*}
  \smash[b]{\sum_{v\in S}  \max_{A'\in\psi^{-1}(A)} e_{A'}(\psi)d_v\l_v(P,A')}\\
  \ge \sum_{v\in S} d_v\l_v\bigl(\psi(P),A\bigr)
     + O\bigl(h(A)+h(\psi)+1\bigr),
\end{multline*}
where the implied constant depends only on the degree of the
map~$\psi$.
\end{proposition}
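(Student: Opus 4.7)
The plan is to derive the inequality from functoriality of Weil local heights (arithmetic distance functions) applied to the universal family $\Psi$. This converts the implicit ``$\psi$-dependence'' of the error into a local height on the parameter space $\Rat_d\times\PP^1$, which is then $O\bigl(h(\psi)+h(A)+1\bigr)$ with constants depending only on $d$.

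Concretely, let $X=\PP^1\times\PP^1\times\Rat_d$ and consider the morphism
\[
  \sigma : X \longrightarrow \PP^1\times\PP^1, \qquad (P,A,\psi)\longmapsto\bigl(\psi(P),A\bigr),
\]
built from $\Psi$. Since $\l_v$ is the local height at $v$ for the diagonal $\Delta\subset\PP^1\times\PP^1$, pullback functoriality gives
\[
  \l_v\bigl(\psi(P),A\bigr) \;=\; \lambda_{\sigma^*\Delta,v}(P,A,\psi) + \eta_v(P,A,\psi).
\]
By~\eqref{eqn:ePsi}, the restriction of the Cartier divisor $\sigma^*\Delta$ to each fiber over $(A,\psi)$ equals the effective divisor $\sum_{A'\in\psi^{-1}(A)}e_{A'}(\psi)\,[A']$ on $\PP^1$, whose local height at $v$ is, up to a bounded error, $\sum_{A'}e_{A'}(\psi)\l_v(P,A')$. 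Since the preimages are distinct, for each $v$ the point $P$ is chordally close to at most one of them, so the remaining terms are bounded uniformly in terms of the mutual chordal distances of the preimages; consequently
\[
  \lambda_{\sigma^*\Delta,v}(P,A,\psi) \;\le\; \max_{A'\in\psi^{-1}(A)} e_{A'}(\psi)\,\l_v(P,A') + \eta'_v(P,A,\psi).
\]

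It remains to bound $\sum_{v\in S}d_v\bigl(|\eta_v|+|\eta'_v|\bigr) = O\bigl(h(\psi)+h(A)+1\bigr)$. For $\eta_v$, this is the standard comparison of Weil local heights for the same Cartier divisor on the projective variety $X$, where heights on the $\Rat_d\times\PP^1$-factor control the dependence on $(A,\psi)$. For $\eta'_v$, writing $\psi=f/g$ in normalized form and factoring $f(z)-A\,g(z)$ over $\Kbar$ shows that the minimum mutual chordal distance of the preimages is controlled from below by the discriminant of $f-A\,g$; Lemma~\ref{lemma:absval-poly} and Proposition~\ref{prop:polynomial-height-ineq} then bound this discriminant contribution by $O\bigl(h(\psi)+h(A)+1\bigr)$ after weighted summation. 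Combining the two displays and summing against $d_v$ over $v\in S$ yields the proposition.

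The main obstacle is the passage from $\lambda_{\sigma^*\Delta,v}$ to the maximum of the individual chordal distances, because it requires controlling the mutual separation of the preimages of $A$ in terms only of the heights of $\psi$ and $A$. The universal family $\Rat_d$ is essential here: it places everything in a single projective setting in which heights on $\Rat_d\times\PP^1$ provide the quantitative bookkeeping demanded by the statement.
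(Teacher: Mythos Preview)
Your overall strategy is essentially the paper's: work over the universal family so that implicit constants depend only on $d$, use functoriality to express $\l_v(\psi(P),A)$ as a sum $\sum_{A'}e_{A'}(\psi)\l_v(P,A')$ plus a boundary term, isolate the maximal term, and bound the remaining terms by the mutual chordal distances $\l_v(A',A'')$ of distinct preimages. The paper carries this out via the distribution relation of \cite[Proposition~6.2(b)]{silverman_mathann87} for the finite map $\Psi$ and the triangle-inequality estimate $e_{A'}(\psi)\l_v(P,A')\le d\min\{\l_v(P,A'_v),\l_v(P,A')\}\le d\,\l_v(A'_v,A')+O(1)$, which makes precise your heuristic that $P$ is $v$-adically close to at most one preimage.

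The genuine gap is your discriminant argument for $\eta'_v$. When $A$ is a critical value of $\psi$, the polynomial $f-Ag$ has a repeated root, its discriminant vanishes, and hence gives no lower bound on the separation of the \emph{distinct} preimages---yet the proposition must hold for every $A\in\PP^1(K)$, and in the application to Theorem~\ref{thm:main-theorem} the target $A$ may certainly be a critical value of $\psi=\f^m$. One could try to repair this by passing to the square-free part of $f-Ag$, but then one must control the height of that factor uniformly in $(A,\psi)$, which is nontrivial and not supplied here. The paper sidesteps the issue entirely with Lemma~\ref{lemma:distinvimage}: it interprets $\l_v(A',A'')$ as the local height for the diagonal of $(\PP^1\times\Rat_d)^2$, sums over all places to get a global height, bounds that by an ample height (valid precisely because $A'\ne A''$ keeps the point off the diagonal), and finally bounds $h(A'),h(A'')$ by $O\bigl(h(A)+h(\psi)+1\bigr)$ via the uniform lower bound $h(\psi(Q))\ge C_1h(Q)-C_2h(\psi)-C_3$ from~\cite{silverman_preprint09}. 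It is this height inequality for preimages, not the discriminant, that delivers the uniformity in $A$.
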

\begin{proof}
The statement and proof of Proposition~\ref{prop:inv-fun} use the
machinery of arithmetic distance functions and local height functions
on quasi-projective varieties as described
in~\cite{silverman_mathann87}, to which we refer the reader for
definitions, notation, and basic properties. We begin with the
distribution relation for finite maps of smooth quasi-projective
varieties~\cite[Proposition~6.2(b)]{silverman_mathann87}. Applying
this relation to the map~$\Psi$ and points~$x,y\in\PP^1\times\Rat_d$
yields
\begin{equation}
  \label{eq:distrel}
  \d\bigl(\Psi(x),y;v\bigr)
  = \sum_{y'\in\Psi^{-1}(y)} e_{y'}(\Psi)\d(x,y';v) 
  + O\left( \l_{\partial(\PP^1\times\Rat_d)^2}(x,y;v) \right).
\end{equation}
Here~$\d(\,\cdot\,,\,\cdot\,;v)$ is a $v$-adic arithmetic distance
function on~$\PP^1\times\Rat_d$ and~$\l_{\partial(\PP^1\times\Rat_d)^2}$
is a local height function for the indicated divisor.  In particular,
if we take $x=(P,\psi)$ and $y=(A,\psi)$, then the arithmetic distance
function~$\d$ and the chordal metric~$\l_v$ defined in
Section~\ref{section:preliminarynotation} satisfy
\begin{align} 
  \label{eqn:dPsiyv}
  \d\bigl(\Psi(x),y;v\bigr)
  = \d\bigl(\Psi(P,\psi),(A,\psi);v\bigr)
  &= \d\bigl((\psi(P),\psi),(A,\psi);v\bigr)\notag\\
  &= \l_v\bigl(\psi(P),A\bigr).
\end{align}
Similarly, if~$y'=(A',\psi)\in\Psi^{-1}(y)$, then
\[
  \d(x,y';v)
  = \d\bigl((P,\psi),(A',\psi);v\bigr)
  = \l_v(P,A').
\]
Further, since $\partial(\PP^1\times\Rat_d) = \PP^1\times\partial\Rat_d$
is the pull-back of a divisor on~$\Rat_d$ and
\[
  \partial(\PP^1\times\Rat_d)^2
  = (\PP^1\times\partial\Rat_d)\times(\PP^1\times\Rat_d)
    + (\PP^1\times\Rat_d)\times(\PP^1\times\partial\Rat_d),
\]
applying
\cite[Proposition~5.3~(a)]{silverman_mathann87} gives
\begin{align}
  \label{eqn:lP1Ratd}
  \l_{\partial(\PP^1\times\Rat_d)^2}(x,y;v)  
  &\gg\ll \l_{\PP^1\times\partial\Rat_d}\bigl((P,\psi);v\bigr)
    + \l_{\PP^1\times\partial\Rat_d}\bigl((A,\psi);v\bigr) \notag\\*
  &\gg\ll \l_{\partial\Rat_d}(\psi;v).
\end{align}
\par
Substituting~\eqref{eqn:ePsi},~\eqref{eqn:dPsiyv},
and~\eqref{eqn:lP1Ratd} into the distribution
relation~\eqref{eq:distrel} yields
\begin{equation}
  \label{eqn;lvpisPA}
  \l_v\bigl(\psi(P),A\bigr)
  = \sum_{A'\in\psi^{-1}(A)} e_{A'}(\psi)\l_v(P,A')
   + O\bigl(\l_{\partial\Rat_d}(\psi;v)\bigr).
\end{equation}
\par
To ease notation, let~$A_v'\in\psi^{-1}(A)$ be a point satisfying
\[
  e_{A_v'}(\psi) \l_v(P,A_v')
  = \max_{A'\in\psi^{-1}(A)} e_{A'}\l_v(P,A').
\]
Then for any $A'\in\psi^{-1}(A)$ we have
\begin{align}
  \label{eqn:eApsilvPA}
  e_{A'}(\psi) \l_v(P,A')
  &=\min\bigl\{ e_{A_v'}(\psi)\l_v(P,A_v'),e_{A'}(\psi) \l_v(P,A') \bigr\} 
     \notag\\*
  &\omit\hfill\text{from choice of $A_v'$,} \notag\\
  &\le d\min\bigl\{\l_v(P,A_v'), \l_v(P,A') \bigr\}
  \quad\text{since $\psi$ has degree $d$,} \notag\\
  &\le d \l_v(A_v',A')+O(1)
  \quad\text{from the triangle inequality.}
\end{align}
This is a nontrivial estimate for~$A'\ne A_v'$, so
in~\eqref{eqn;lvpisPA} we pull off the~$A_v'$ term and
use~\eqref{eqn:eApsilvPA} for the other terms to obtain
\begin{equation}
  \label{eqn:lvpsiPAe}
  \l_v\bigl(\psi(P),A\bigr)
  \le e_{A_v'}(\psi)\l_v(P,A_v')
    +  d\sum_{\substack{\hidewidth A'\in\psi^{-1}(A)\hidewidth \\A'\ne A_v'}}
           \l_v(A_v',A')
    + O\bigl(\l_{\partial\Rat_d}(\psi;v)\bigr).
\end{equation}
\par
The next lemma gives an upper bound for~$\l_v(A_v',A')$.

\begin{lemma}
\label{lemma:distinvimage}
There is a constant~$C=C(d)$ such that the following holds.  Let
$\psi\in\Rat_d(\Qbar)$, let~$A\in\PP^1(\Qbar)$, and
let~$A',A''\in\psi^{-1}(A)$ be \emph{distinct} points. Then
\[
  \sum_{v\in M_K}   d_v\l_v(A',A'')
  \le C \bigl(h(A)+h(\psi)+1\bigr).
\]
\end{lemma}
\begin{proof}
In the notation of~\cite{silverman_mathann87}, we have
\begin{align*}
  \l_v(A',A'') &= \d_{\PP^1\times\Rat_d}\bigl((A',\psi),(A'',\psi);v\bigr)\\
  &= \l_{(\PP^1\times\Rat_d)^2,\D}\bigl((A',\psi),(A'',\psi);v\bigr),
\end{align*}
where~$\D$ is the diagonal of $(\PP^1\times\Rat_d)^2$. 
Summing over~$v$ gives height functions
\begin{align*}
  \smash[b]{
    \sum_{v\in M_K}\l_v(A',A'')
  }
  &= h_{(\PP^1\times\Rat_d)^2,\D}\bigl((A',\psi),(A'',\psi)\bigr) \\
  &\qquad{}
  + O\bigl(h_{\partial(\PP^1\times\Rat_d)^2}\bigl((A',\psi),(A'',\psi)\bigr)+1
   \bigr).
\end{align*}
Choosing an ample divisor~$H$ on~$\PP^1\times\Rat_d$, we use the fact
that heights with respect to a subscheme are dominated by ample
heights away from the support of the
subscheme~\cite[Proposition~4.2]{silverman_mathann87}. (This is where
we use the assumption that~$A'\ne A''$, which ensures that the 
point~$\bigl((A',\psi),(A'',\psi)\bigr)$ is not on the diagonal.) This yields
\begin{align}
  \label{eqn:lvAhAA}
  \smash[b]{
    \sum_{v\in M_K}\l_v(A',A'')
  }
  &\ll h_{\PP^1\times\Rat_d,H}(A',\psi)+ h_{\PP^1\times\Rat_d,H}(A'',\psi)+1
    \notag\\
  &\ll h(A')+h(A'')+h(\psi)+1.
\end{align}
\par
We now use~\cite[Theorem~2]{silverman_preprint09}, which says that
there are positive constants~$C_1,C_2,C_3$, depending only on the
degree of~$\psi$, such that
\begin{equation}
  \label{eqn:hpsiPChP}
  h\bigl(\psi(P)\bigr)\ge C_1h(P)-C_2h(\psi)-C_3.
\end{equation}
(The paper~\cite{silverman_preprint09} deals with general rational
maps~$\PP^n\dashrightarrow\PP^n$. In our case with~$n=1$, it would be
a tedious, but not difficult, calculation to give explicit values for
the~$C_i$, including of course~$C_1=\deg\psi$.) Applying~\eqref{eqn:hpsiPChP}
with~$P=A'$ and~$P=A''$, we substitute into~\eqref{eqn:lvAhAA} to obtain
\[
  \sum_{v\in M_K}\l_v(A',A'')
  \ll h(A) + h(\psi)+1,
\]
which completes the proof of Lemma~\ref{lemma:distinvimage}.
\end{proof}

We use Lemma~\ref{lemma:distinvimage} to bound the sum in 
the right-hand side of the inequality~\eqref{eqn:lvpsiPAe}.
We note that~$\l_v(A',A'')\ge0$ for all points, so the lemma
implies in particular that~$\sum_{v\in S}d_v\l_v(A',A'')\ll
h(A)+h(\psi)+1$ for any set of places~$S$.
Further, the sum in~\eqref{eqn:lvpsiPAe} has at most~$d-1$ terms.
Hence we obtain 
\[
  \sum_{v\in S} d_v\l_v\bigl(\psi(P),A\bigr)  
  \le \sum_{v\in S} e_{A_v'}(\psi)d_v\l_v(P,A_v')
    + O\bigl(h(A)+h(\psi)+1\bigr).
\]
Note that in this last inequality, the $O\bigl(h(\psi)\bigr)$ term
comes from two places,  Lemma~\ref{lemma:distinvimage} and 
\[
  \sum_{v\in S}d_v \l_{\partial\Rat_d}(\psi;v)
  \le \sum_{v\in M_K}d_v \l_{\partial\Rat_d}(\psi;v)
  = h_{\partial\Rat_d}(\psi) = O\bigl(h(\psi)+1\bigr),
\]
where the last equality comes from the fact that $\partial\Rat_d$ is
a hypersurface of degree $2 d$ in $\PP^{2d+1}.$   
This completes the proof of Proposition~\ref{prop:inv-fun}.
\end{proof}

\section{A Ramification Estimate and a Quantitative Version of Roth's
  Theorem} 
\label{subsec:distandroth}
In this section we state two known results that will be needed to
prove our main theorem. The first says that away from exceptional
points, the ramification of~$\f^m$ tends to spread out as~$m$
increases.

\begin{lemma}
\label{lemma:ramifindex}
Fix an integer~$d\ge2$.  There exist constants $\kappa_1$ and
$\kappa_2<1$, depending only on $d$, such that for all degree~$d$
rational maps $\f : \PP^1 \to \PP^1$, all points $Q \in \PP^1$ that
are not exceptional for~$\f$, all integers $m \ge 1$, and all $ P \in
\f^{-m}(Q)$, we have
\[
  e_P(\f^m) \le \kappa_1 (\kappa_2 d)^m.  
\]
\end{lemma}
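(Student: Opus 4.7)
The plan is to combine the chain rule for ramification with a Riemann--Hurwitz bound on totally ramified points and a pigeonhole-plus-dynamics argument that uses the hypothesis on $Q$.

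The starting point is the identity
\[
  e_P(\f^m)=\prod_{i=0}^{m-1}e_{\f^i(P)}(\f),
\]
so I would like to control each factor. Let $T=\{A\in\PP^1:e_A(\f)=d\}$ denote the totally ramified locus of $\f$. Riemann--Hurwitz applied to $\f:\PP^1\to\PP^1$ gives $\sum_A(e_A(\f)-1)=2d-2$, so $|T|(d-1)\le 2d-2$ and hence $|T|\le 2$. Each factor in the product is at most $d$, with equality precisely when $\f^i(P)\in T$; factors with $\f^i(P)\notin T$ are bounded by $d-1$. Setting $N=\#\{0\le i\le m-1:\f^i(P)\in T\}$, this gives the preliminary bound $e_P(\f^m)\le d^N(d-1)^{m-N}$.

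The core step is to show $N\le\tfrac{2}{3}m+C(d)$, and I expect this to be the main obstacle. Let $I=\{i:\f^i(P)\in T\}$, and choose $i_a=\min I$ so that the indices before $i_a$ contribute nothing. If $|I|\ge 3$, then since $|T|\le 2$ the pigeonhole principle yields $\f^{i_a}(P)=\f^{i_b}(P)=t\in T$ for some $i_b>i_a$, forcing $t$ to be $\f$-periodic; let $p$ be its exact period. For $i\ge i_a$ the orbit of $P$ cycles through $\{t,\f(t),\ldots,\f^{p-1}(t)\}$, so one of two things happens. Either this entire cycle lies in $T$, in which case $p\le|T|\le 2$ and the chain rule yields $e_t(\f^2)=d^2$ together with $\f^2(t)=t$; then $t$ is exceptional, the exceptional set is forward invariant, and $Q=\f^{m-i_a}(t)$ is exceptional, contradicting the hypothesis. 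Or the cycle has at least one element outside $T$, forcing either $p=2$ with exactly one cycle-element in $T$ (so $N\le m/2+1$) or $p\ge 3$ with at most two cycle-elements in $T$ (so each block of $p$ consecutive iterates contributes at most $2$ to $I$, giving $N\le 2m/p+2\le 2m/3+2$). The remaining case $|I|\le 2$ is strictly stronger.

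Substituting $N\le\tfrac{2}{3}m+C$ into the preliminary bound and rearranging yields
\[
  e_P(\f^m)\le\left(\frac{d}{d-1}\right)^{\!C}\bigl(d^{2/3}(d-1)^{1/3}\bigr)^{m}
  =\kappa_1(\kappa_2 d)^m
\]
with $\kappa_2=((d-1)/d)^{1/3}<1$ and $\kappa_1=\kappa_1(d)$, which is the desired estimate.
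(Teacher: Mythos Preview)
Your argument is essentially correct and follows the same strategy as the proof in Silverman's \emph{Arithmetic of Dynamical Systems} (which the paper simply cites without reproducing): combine the multiplicativity of ramification indices with the Riemann--Hurwitz bound $|T|\le 2$, then use the non-exceptionality of~$Q$ to rule out the case where the relevant orbit segment lingers entirely in~$T$.

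There is one small gap. After setting $i_a=\min I$, you invoke pigeonhole to conclude $\f^{i_a}(P)=\f^{i_b}(P)$, but pigeonhole only guarantees that \emph{some} two indices in~$I$ give the same point of~$T$; the minimal index need not be one of them. For instance, $\f^{i_a}(P)$ could be a totally ramified point lying on the strictly preperiodic tail of the orbit (appearing exactly once), while the repeated element of~$T$ lies on the eventual cycle and first appears at a later index. In that situation your assertion ``for $i\ge i_a$ the orbit of~$P$ cycles through $\{t,\f(t),\ldots,\f^{p-1}(t)\}$'' is false, and the subsequent case split does not directly apply.

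The repair is immediate and does not disturb your constants. Once a repetition among $\{\f^i(P):i\in I\}$ is forced, $P$ is preperiodic; let~$j$ be the preperiod and~$p$ the period, and split $\{0,\dots,m-1\}$ into the tail $[0,j)$ and the periodic part $[j,m)$. The tail consists of distinct points, so contributes at most $|T\cap\text{tail}|\le 2-k$ indices to~$I$, where $k=|T\cap\text{cycle}|$; the periodic part contributes at most $k\lceil(m-j)/p\rceil$. Since $|I|\ge 3$ forces $k\ge 1$, and $k=p$ makes every cycle point totally ramified (hence exceptional, hence~$Q$ exceptional), one has $1\le k<p$, whence $k/p\le 2/3$. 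This yields $N\le 2+\tfrac{2}{3}m$ exactly as you claim, and your values $\kappa_2=((d-1)/d)^{1/3}<1$ and $\kappa_1=(d/(d-1))^{2}$ go through unchanged.
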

\begin{proof}
This is \cite[Lemma~3.52]{silverman_book07}; see in particular the last
paragraph of the proof. It is not difficult to give explicit values
for the constants. In particular, if~$Q$ is not preperiodic, then
the stronger estimate \text{$e_P(\f^m)\le e^{2d-2}$} is true for all~$m$.
\end{proof}

The second result we need is the following quantitative version of
Roth's Theorem.

\begin{theorem}
\label{thm:roth}
Let $S$ be a finite subset of $M_K$ that contains all infinite
places. We assume that each place in $S$ is extended to $\Kbar$ in some
fashion. Set the following notation.
\begin{notation}
\item[$s$]
  the cardinality of $S$.
\item[$\Upsilon$]
  a finite, $G_{\Kbar/K}$-invariant subset of $K$.
\item[$\b$]
  a map $ S \to \Upsilon$.
\item[$\mu > 2$]
  a constant.
\item[$M\ge 0$]
  a constant.
\end{notation}
There are constants $r_1$ and $r_2$, depending only on $[K :\QQ]$,
$\#\Upsilon$, and $\mu$, such that there are at most $4^{s}r_1$
elements $x\in K$ satisfying both of the following conditions\textup:
\begin{gather}
  \label{roth1}
  \sum_{v\in S}\, d_v \log^+|x - \b_v|_v^{-1}   \ge \mu h(x) -  M.
  \\*
  \label{roth2}
  \displaystyle h(x)  \ge r_2 \max_{v\in S}\{h(\b_v), M, 1\}.
\end{gather}
\end{theorem}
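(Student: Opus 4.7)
The plan is to prove this quantitative Roth-type statement by combining a two-level pigeonhole partition with a gap principle and the qualitative Roth theorem used as a finiteness input. The factor $4^{\#S}$ will arise from the pigeonhole, the geometric spacing of solutions of a fixed type from the gap principle, and the uniform cap~$r_1$ from qualitative Roth.

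First, I would associate to each candidate solution~$x$ a combinatorial ``type''. Because of the~$\log^+$, the sum in~\eqref{roth1} is automatically supported on $T_0(x)=\{v\in S:|x-\b_v|_v<1\}$, giving one binary split per place. I would then refine within $T_0(x)$ by separating places whose individual contribution $d_v\log|x-\b_v|_v^{-1}$ carries a large versus small share of the total, producing a second binary split. The combined two-level partition yields at most $2^{\#S}\cdot 2^{\#S}=4^{\#S}$ types, and absorbing the ``small'' contributions into the slack transforms~\eqref{roth1} into
\[
  \sum_{v\in T(x)}d_v\log|x-\b_v|_v^{-1}\ge \m'h(x)-M
\]
with some $\m'>2$ depending only on~$\m$ and~$\#\Upsilon$. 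Because the map $\b:S\to\Upsilon$ is part of the given data, no additional combinatorial factor is needed to record $(\b_v)_{v\in T(x)}$.

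Second, I would establish a gap principle: if $x_1\ne x_2$ are distinct solutions of the same type with $h(x_1)\le h(x_2)$, then
\[
  h(x_2)\ge (\m'-1)h(x_1)-C\max_v\{h(\b_v),M,1\}
\]
for an absolute constant $C$. The derivation adds the two approximation inequalities, uses the estimate $|x_1-x_2|_v\le 2\max(|x_1-\b_v|_v,|x_2-\b_v|_v)$ at each $v\in T(x)$, compares the resulting sum against the product formula $\sum_{v\in M_K}d_v\log|x_1-x_2|_v=0$, and controls the contribution at places outside~$S$ by the trivial bound $\sum_{v\notin S}d_v\log|x_1-x_2|_v^{-1}\ge -h(x_1)-h(x_2)-O(1)$. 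Since $\m'>2$, the ratio $\m'-1$ exceeds~$1$, giving a true geometric gap. Choosing~$r_2$ large enough to absorb the error terms, the heights of solutions of any fixed type form a geometrically increasing sequence with ratio at least $1+\d$ for some $\d=\d(\m)>0$.

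Finally, the qualitative Roth theorem applied to each of the finitely many $\b\in\Upsilon$ over the fixed field~$K$ yields a constant $r_1=r_1([K:\QQ],\#\Upsilon,\m)$ bounding the number of $x\in K$ which approximate some $\b\in\Upsilon$ to exponent~$\m'$ with height beyond a uniform threshold. Combined with the geometric gap, this caps the number of solutions of each fixed type by~$r_1$, producing the final bound $4^{\#S}r_1$. The principal obstacle is arranging the pigeonhole so that the loss $\m\to\m'$ depends only on~$\m$ and~$\#\Upsilon$ but \emph{not} on~$\#S$; if we permitted $\m'$ to shrink with~$\#S$, the qualitative Roth step would deteriorate and the combinatorial factor could no longer be isolated as~$4^{\#S}$ times a constant. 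Holding the partition thresholds absolute is precisely what forces~$4^{\#S}$ to appear as a genuine multiplicative combinatorial factor in the final bound, rather than being absorbable into~$r_1$.
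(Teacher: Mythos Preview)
The paper does not prove this theorem at all: it simply cites \cite[Theorem~2.1]{silverman_crelles87} (with explicit constants in~\cite{rob.gross90}). So there is no ``paper's own proof'' to compare against; the relevant question is whether your sketch is a viable route to the quantitative result.

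Your first two steps --- the $4^{\#S}$ pigeonhole into types and the gap principle forcing geometric growth of heights within a type --- are standard ingredients in proofs of quantitative Roth and are set up reasonably. The problem is the third step. You write that ``the qualitative Roth theorem applied to each of the finitely many $\b\in\Upsilon$ \dots\ yields a constant $r_1=r_1([K:\QQ],\#\Upsilon,\mu)$''. It does not. Qualitative Roth, applied to a fixed algebraic number~$\b$, tells you only that the set of good approximations is finite; the cardinality bound it furnishes depends on the particular~$\b$, not merely on~$\#\Upsilon$ and~$[K:\QQ]$. Extracting a bound that is uniform in the approximated numbers is precisely the content of the quantitative theorem you are trying to prove, so invoking qualitative Roth here is circular.

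Concretely: the gap principle tells you that the heights of solutions of a fixed type grow at least like $(1+\d)^k$, but it provides no ceiling on the largest height, so by itself it does not bound the count. The genuine work in quantitative Roth (Davenport--Roth, Bombieri--van der Poorten, Evertse, Gross) lies in bounding the number of \emph{large} solutions, and this requires re-entering Roth's machinery --- constructing the auxiliary polynomial, controlling its index at a tuple of large solutions, and applying Roth's lemma (or Dyson/Esnault--Viehweg) to derive a contradiction once there are too many of them. That argument, not an appeal to the qualitative statement, is what produces a bound depending only on $[K:\QQ]$, $\#\Upsilon$, and~$\mu$. Your outline omits exactly this part.
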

\begin{proof}
This is \cite[Theorem~2.1]{silverman_crelles87}, with a small change
of notation.  For explicit values of the constants,
see~\cite{rob.gross90}.
\end{proof}  

\section{A Bound for the Number of Quasi-Integral Points in an Orbit}
\label{sec:main-theorem}
In this section we prove our main result, which is an explicit upper
bound for the number of iterates~$\f^n(P)$ that are close to a given
base point~$A$ in any one of a fixed finite number of $v$-adic
topologies. Here is the precise statement.

\begin{theorem}
\label{thm:main-theorem}
Let $\f\in K(z)$ be a rational map of degree $d \ge 2$.  Fix a point
$A \in \PP^1(K)$ which is not an exceptional point for $\f$, and let
$P\in \PP^1(K)$ be a wandering point for $\f$. For any finite set of
places $S\subset M_K$ and any constant $1\ge \ve > 0$, define a set of
non-negative integers
\[
  \G_{\f,S}(A, P, \ve)
  = \left\{n\ge0 : \sum_{v\in S}  d_v \l_v(\f^n P, A) 
  \ge  \ve \hhat_{\f}(\f^n P) \right\}.
\]
\begin{parts}
\Part{(a)}
There exist constants 
\[
  \g_1=\g_1\bigl(d,\ve,[K:\QQ]\bigr)
  \quad\text{and}\quad
  \g_2=\g_2\bigl(d,\ve,[K:\QQ]\bigr)
\]
such that
\begin{equation}
  \label{eqn:GfSApebound1}
  \#\left\{ n\in \G_{\f,S}(A, P, \ve)  :
      n > \g_1 + \log^+_d\left(\frac{h(\f) +  \hhat_\f(A)}{\hhat_\f(P)}\right)
  \right\}
  \le 4^{\#S}\g_2.
\end{equation}
\Part{(b)}
In particular, there is a constant $\g_3=\g_3\bigl(d,\ve,[K:\QQ]\bigr)$
such that
\begin{equation}
  \label{eqn:GfSApebound}
  \# \G_{\f,S}(A, P, \ve)  
  \le  4^{\#S}\g_3
      + \log^+_d\left(\frac{h(\f) +  \hhat_\f(A)}{\hhat_\f(P)}\right).
\end{equation}
\Part{(c)}
There is a constant~$\g_4=\g_4\bigl(K,S,\f,A,\e)$ that is independent
of~$P$ such that
\[
  \max \G_{\f,S}(A, P, \ve)  \le \g_4.
\]
\end{parts}
\end{theorem}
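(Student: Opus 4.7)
The plan is to reduce the defining inequality of $\G_{\f,S}(A,P,\ve)$ to the quantitative Roth theorem (Theorem~\ref{thm:roth}), by pulling the approximation of $A$ by $\f^n P$ back through a controlled iterate $\f^m$, using the inverse function theorem (Proposition~\ref{prop:inv-fun}) and the ramification estimate (Lemma~\ref{lemma:ramifindex}); part~(c) will then follow from a Northcott-type dichotomy on $h(P)$.

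I would first fix an integer $m = m(d,\ve,[K:\QQ])$ large enough that
\[
  \mu := \frac{\ve}{\k_1 \k_2^m} > 2,
\]
which is possible because $\k_2<1$. For each $n \in \G_{\f,S}(A,P,\ve)$ with $n \ge m$, writing $n = m+k$ and applying Proposition~\ref{prop:inv-fun} to $\psi = \f^m$ at $\f^k P$ produces, for each $v \in S$, a preimage $A'_v \in (\f^m)^{-1}(A)$ realizing the local maximum. Combining with the defining inequality of $\G_{\f,S}$, the identity $\hhat_\f(\f^n P) = d^m \hhat_\f(\f^k P)$, and the bound $e_{A'}(\f^m) \le \k_1(\k_2 d)^m$ from Lemma~\ref{lemma:ramifindex}, then dividing through, one obtains
\[
  \sum_{v\in S} d_v\,\l_v(\f^k P, A'_v) \;\ge\; \mu\,\hhat_\f(\f^k P) - O\bigl(h(A)+h(\f^m)+1\bigr).
\]

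Next, Lemma~\ref{lemma:log_chordal} converts each $\l_v(\f^k P, A'_v)$ into $\log^+|\f^k P - A'_v|_v^{-1}$ modulo error terms of the form $\l_v(\f^k P,\infty)+\l_v(A'_v,\infty)$, whose sum over $v\in S$ is bounded by $h(\f^k P)+h(A'_v)+O(1)$; Proposition~\ref{prop:heightproperty}(b) then replaces $\hhat_\f$ by $h$ modulo $O(h(\f))$. The resulting inequality has the shape required by Theorem~\ref{thm:roth}:
\[
  \sum_{v\in S} d_v\,\log^+|\f^k P - A'_v|_v^{-1} \;\ge\; \mu'\,h(\f^k P) - M,\qquad \mu' > 2,
\]
with $M = O\bigl(h(\f)+\hhat_\f(A)+1\bigr)$. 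Applying Theorem~\ref{thm:roth} with $\Upsilon = (\f^m)^{-1}(A)$, the number of $x = \f^k P$ satisfying both this inequality and condition~\eqref{roth2} is absorbed into a bound of the form $4^{\#S}\g_2$ for $\g_2 = \g_2(d,\ve,[K:\QQ])$; any remaining $x$ has $h(\f^k P) < r_2\max\{h(\b_v),M,1\} = O\bigl(h(\f)+\hhat_\f(A)+1\bigr)$, which via $h(\f^k P) = d^k\hhat_\f(P)+O(h(\f))$ yields
\[
  k \le \log^+_d\bigl((h(\f)+\hhat_\f(A))/\hhat_\f(P)\bigr)+O(1).
\]
Adding the $m$ trivial exponents $n<m$ proves parts~(a) and~(b).

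For part~(c), I split on $h(P)$. If $h(P) > H'$ for a suitable $H' = H'(\f,A,\ve,K,S)$, then $h(\f^k P) \ge d^k h(P) - O(h(\f))$ exceeds the absolute Roth height cut-off of size $O\bigl(h(\f)+\hhat_\f(A)+1\bigr)$ already for $k$ bounded in terms of $H'$ and the cut-off, while the finite set of ``Roth-bad'' solutions has a (non-explicit, but finite) maximum height depending only on $K, S, \f, A, \ve$, so only boundedly many $n$ can arise. If $h(P) \le H'$, Northcott's theorem places $P$ in a finite set of $K$-rational points; since $P$ wanders, $\hhat_\f(P)$ attains a strictly positive minimum over the wandering points of that finite set, and applying part~(b) with this uniform lower bound gives a $P$-independent bound on $\max \G_{\f,S}(A,P,\ve)$. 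The main obstacle throughout is the careful bookkeeping of error terms to extract the precise form of~\eqref{eqn:4Sgldhfba}; in particular, one must handle the conversion between $\l_v$ and $\log|\cdot|_v^{-1}$ via Lemma~\ref{lemma:log_chordal} case-by-case, absorbing the ``failed hypothesis'' case (where $\l_v(\f^k P,A'_v)$ is not much larger than $\l_v(A'_v,\infty)$) into the error term $M$, and arrange the Roth application over the finitely many relevant assignments $\b: S \to (\f^m)^{-1}(A)$ so that the total count collapses into the stated $4^{\#S}\g_2$.
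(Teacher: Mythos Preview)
Your overall strategy matches the paper's: fix $m$ via Lemma~\ref{lemma:ramifindex}, pull back through $\f^m$ via Proposition~\ref{prop:inv-fun}, convert to $\log^+|\cdot|_v^{-1}$ via Lemma~\ref{lemma:log_chordal}, and feed the result into Theorem~\ref{thm:roth}. Two points of bookkeeping need correction, however.

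First, you assert that Lemma~\ref{lemma:log_chordal} yields error terms ``of the form $\l_v(\f^k P,\infty)+\l_v(A'_v,\infty)$, whose sum over $v\in S$ is bounded by $h(\f^k P)+h(A'_v)+O(1)$,'' and then immediately claim $M=O(h(\f)+\hhat_\f(A)+1)$. These are incompatible: an $h(\f^kP)$-sized error would be absorbed into the main term and lower the Roth exponent from $\mu$ to $\mu-1$, forcing $\mu>3$ rather than the $\mu>2$ you arranged. In fact Lemma~\ref{lemma:log_chordal} is sharper than you state: when its hypothesis holds one gets $\l_v(x,y)\le\log^+|x-y|_v^{-1}+2\l_v(y,\infty)+\log\ell_v$, and when it fails one has $\l_v(x,y)\le\l_v(y,\infty)+\log\ell_v$ directly; in neither case does $\l_v(x,\infty)=\l_v(\f^kP,\infty)$ appear. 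The paper uses exactly this, splitting $S=S'\cup S''$ according to which alternative holds and applying Roth with $M=0$ and $\mu=\tfrac52$. Your closing paragraph gestures at this split, but your displayed chain of inequalities does not reflect it.

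Second, in part~(c) for the small-$h(P)$ case you invoke part~(b), but~(b) bounds the \emph{cardinality} of $\G_{\f,S}(A,P,\ve)$, not its maximum element, so the conclusion does not follow as written. You could repair this by noting that each of the finitely many wandering $P$ with $h(P)\le H'$ has finite $\G_{\f,S}$ and then taking the maximum over this finite collection. The paper avoids the case split altogether: the set of Roth-exceptional $x$ (your $T_1$-type solutions) depends only on $\Upsilon=z(\f^{-m}(A))$, $S$, $K$, and $\mu$, not on $P$, so its maximum height is a constant of the problem; combined with $\hhat_\f(P)\ge\hhat^{\min}_{\f,K}>0$ (itself a Northcott argument), this bounds $\max T_1$ uniformly, and the explicit bounds already obtained for $T_2,T_3$ have the form $n\le C+\log_d^+\bigl((\hhat_\f(A)+h(\f))/\hhat_\f(P)\bigr)$, into which one substitutes $\hhat^{\min}_{\f,K}$ for $\hhat_\f(P)$.
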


Before giving the proof of Theorem~\ref{thm:main-theorem}, we make a
number of remarks.

\begin{remark}
Note that as a consequence of
Proposition~\ref{prop:heightproperty}(d), we have $\hhat_\f(P) >
0$ if $P$ is wandering point for $\f.$ Hence the right-hand side of
(\ref{eqn:GfSApebound}) is well defined. 
\end{remark}

\begin{remark}
If we take $\ve=1$, then the set~$\G_{\f,S}(A,P,\ve)$ more-or-less
coincides with the set of points in the orbit~$\Orbit_\f(P)$ that are
$S$-integral with respect to~$A$. We say more-or-less
because~$\G_{\f,S}(A,P,\ve)$ is defined using the canonical height
of~$\f^n(P)$, rather than the naive height. But using the inequality
$\bigl| \hhat_\f(P) - h(P)\bigr| \ll h(\f) + 1$ from
Proposition~\ref{prop:heightproperty} and adjusting the constants, it
is not hard to see that the estimate~\eqref{eqn:GfSApebound} remains
true for the set
\[
  \G^{\text{naive}}_{\f,S}(A, P, \ve)
  = \left\{n\ge0 : \sum_{v\in S}  d_v \l_v(\f^n P, A) 
  \ge  \ve h(\f^n P) \right\}.
\]
(See the proof of Corollary~\ref{cor:s-integer}.)  For example,
taking~$A=\infty$, the set $\G^{\text{naive}}_{\f,S}(A, P, \ve)$
consists of the points~$\f^n(P)$ such that $z\bigl(\f^n(P)\bigr)$ is
$(S, \ve_0)$-integral for some $\ve_0$. This is the motivation for
saying that the points in~$\G_{\f,S}(A,P,\ve)$ are
quasi-$(S,\ve)$-integral with respect to~$A$, where~$\ve$ measures the
degree of~$S$-integrality.
\end{remark}

\begin{remark}
\label{rmk:dynamical_langconj}  
The dependence of the bounds~\eqref{eqn:GfSApebound1}
and~\eqref{eqn:GfSApebound}
on~$h(\f)$, $\hhat_\f(A)$, and $\hhat_v(P)$ are quite interesting.  A
dynamical analogue of a conjecture of Lang asserts that the
ratio~$h(\f)/\hhat_\f(P)$ is bounded, independently of~$\f$ and~$P$,
provided that~$\f$ is suitably minimal with respect
to~$\PGL_2(K)$-conjugation. See~\cite[Conjecture~4.98]{silverman_book07}.
\par
On the other hand, there cannot be a uniform bound for the ratio
$\hhat_\f(A)/\hhat_\f(P)$, since~$A$ and~$P$ may be chosen arbitrarily
and independent of one another. This raises the interesting question
of whether the bound for $\#\G_{\f,S}(A, P, \ve)$ actually needs to
depend on~$A$. Even in very simple situations, it appears difficult to
answer this question. For example, consider the map~$\f(z)=z^2$, the
initial point~$P=2$, and the set of primes~$S=\{\infty,3,5\}$.  
As~$A\in\QQ^*$ varies, is it possible for the
orbit~$\Orbit_\f(P)$ to contain more and more points that
are~$S$-integral with respect to~$A$?  Writing $A=x/y$, we are asking
if
\[
  \sup_{x,y\in \ZZ} 
  \#\bigl\{ (n,i,j)\in\NN^3 :   y\cdot 2^{2^n} - x = 3^i5^j\bigr\}
  = \infty.
\]
\end{remark}

\begin{remark}
We observe that $\#\G_{\f,S}(A, P, \ve)$ can grow as fast
as~$\log(\ve^{-1})$ as~$\ve\to0^+$.  For example, consider the
map~$\f(z)=z^d+z^{d-1}$, the points~$A=0$ and $P=p$, and the set of
primes $S=\{p\}$. Since $\f^n(z)=z^{(d-1)^n}+\text{h.o.t.}$, we have
$\bigl|\f^n(p)\bigr|_p=p^{-(d-1)^n}$, so
\[
  \l_p(\f^nP,A)=\l_p(\f^n(p),0)
  = -\log\bigl|\f^n(p)\bigr|_p=(d-1)^n\log p.
\]
Thus $\G_{\f,S}(A, P, \ve)$ consists of all $n\ge0$ satisfying
\[
  (d-1)^n\log p \ge \ve\hhat_\f(\f^n P) = \ve d^n\hhat_\f(P).
\]
Hence
\begin{align*}
  \#\G_{\f,S}(A, P, \ve)
  &= \left\lfloor\left.{\log\left(\dfrac{\log p}{\ve\hhat_\f(P)}\right)}\right/
                {\log\left(\dfrac{d}{d-1}\right)}\right\rfloor + 1 \\
  &= \frac{\log(\ve^{-1})}{\log (d/(d-1))} + o(\log\ve^{-1})
    \qquad\text{as $\e\to0^+$.}  
\end{align*}
In particular, if~$\ve$ is small and~$d$ is large, so
$\log( d/(d-1))\approx1/(d-1)$, then we have
\[
  \#\G_{\f,S}(A, P, \ve) \approx (d-1)\log(\e^{-1}).
\]
\end{remark}

\begin{remark}
See~\cite{gross-silverman95,silverman_crelles87} for a version of
Theorem~\ref{thm:main-theorem} for elliptic curves. These papers
deal with points on an elliptic curve~$E$ that are quasi-$(S,\e)$-integral with
respect to~$O$, the zero point of~$E$. It is also of interest to
study points that are integral with respect to some other point~$A$,
and in particular to see how the bound depends on~$A$. The distance
function on~$E$ is translation invariant up to~$O\bigl(h(E)\bigr)$,
so we want to estimate the size of the set
\begin{equation}
  \label{eqn:intptsonE}
  \biggl\{P \in E(K) : \sum_{v\in S} d_v \l_v(P - A) \ge \ve
  \hhat_E(P) \biggr\}.
\end{equation}
Translating the points in~\eqref{eqn:intptsonE}
by~$A$, we want to count points satisfying $\sum
d_v\l_v(P)\ge\hhat_E(P+A)+O\bigl(h(E)\bigr)$. The canonical height on
an elliptic curve is a quadratic form, so
$\hhat_E(P+A)\le2\hhat_E(P)+2\hhat_E(A)$. Using the results
in~\cite{silverman_crelles87}, this leads to a bound for the
set~\eqref{eqn:intptsonE} in which the dependence on~$A$ appears as
the ratio~$\hhat_E(A)/\hhat_E(P_{\min})$, where~$P_{\min}$ is the
point of smallest nonzero height in~$E(K)$.  This is analogous to the
dependence on~$A$ in~\eqref{eqn:GfSApebound}.
\end{remark}

\begin{proof}[Proof of Theorem~$\ref{thm:main-theorem}$]
To ease notation, we will write $\G_S(\ve)$ in place of $\G_{\f,S}(A,
P, \ve)$.  For the given $\ve$, we set $m\ge1$ to be the smallest
integer satisfying
\[
  \kappa_2^m \le \frac{\ve}{5 \kappa_1},
\]
where $\kappa_1$ and $\kappa_2$ are the positive constants appearing in
Lemma~\ref{lemma:ramifindex}.  Since $\kappa_2 < 1$, there exists such
an integer $m$.  Notice that $\kappa_1$ and $\kappa_2$ depend only on
$d$, and consequently $m$ depends only on $d$ and $\ve$.
More precisely, if we assume (without loss of generality) that $\ve<\frac12$,
then~$m\ll\log(\ve^{-1})$, where the implied constant depends only on~$d$.
\par
Put
\[
  \bfe_m
   = \max_{A'\in \f^{-m} (A)}\, e_{A'}(\f^m).
\]
Then Lemma~\ref{lemma:ramifindex} and our choice of~$m$ imply that
\begin{equation}
  \label{eqn:emk1k2dm}
  \bfe_m  \le \kappa_1(\kappa_2 d)^m  \le \frac{\ve}{5}d^m.
\end{equation}
Further, Proposition~\ref{prop:inv-fun} says that for all $Q \in
\PP^1(K)$ we have
\begin{align}
  \label{eqn:ramf-appr}
  \bfe_m \sum_{v\in S} \max_{A'\in \f^{-m}(A)} d_v &\l_v(Q, A') \notag\\* 
  &\ge \sum_{v\in S} d_v\l_v(\f^m Q, A) - O\bigl(h(A)+h(\f^m)+1\bigr),
\end{align}
where the implied constant depends on~$\deg(\f^m)$.
\par
Suppose first that $n \le m$ for all $ n \in \G_S(\ve)$. Then clearly
$\#\G_S(\ve) \le m$, and from our choice of~$m$
we have
\[
  \#\G_S(\ve) \le m 
   \le \frac{\log(5\kappa_1)+\log(\ve^{-1})}{\log(\kappa_2^{-1})}+1.
\]
This upper bound has the desired form, since~$\kappa_1>0$
and~$1>\kappa_2>0$ depend only on~$d$.

We may thus assume that there exists an $ n \in \G_S(\ve)$ such that
$n>m$, and we fix such an $n \in \G_S(\ve).$  
By the definition of $\G_{S}(\ve)$ we have
\[
   \ve \hhat_{\f}(\f^n P) \le \sum_{v\in S} d_v\, \l_v(\f^n P, A).
\]
Applying~\eqref{eqn:ramf-appr} to the point $Q=\f^{n-m}(P)$ yields
\begin{align}
\label{eqn:bound1}
  \ve \hhat_{\f}(\f^n P) \le \bfe_m
  \smash[b]{\sum_{v\in S}}
  \,d_v  \max_{A'\in
  \f^{-m}(A)}\, &\l_v(\f^{n-m}P, A')  \notag\\*
  &{} + O\bigl(h(A)+h(\f^m)+1\bigr),
\end{align}
where the big-$O$ constant depends on~$\deg\f^m=d^m$, so on~$d$
and~$\ve$.

For each $v\in S$ we choose an $A'_v \in \f^{-m}(A)$ satisfying
\[
 \l_v(\f^{n-m} P, A_v') = \max_{A'\in \f^{-m}A}\l_v(\f^{n-m}P, A').
\]
(For ease of exposition, we will assume that $z(A') \ne \infty$ for
all $A' \in \f^{-m} A$. If this is not the case, then we use~$z$
for some of the~$A'$'s, and we use~$z^{-1}$ for the others.)

Let~$S'\subset S$ be the set of places in~$S$ defined by
\[
  S' = \bigl\{v\in S : \l_v\bigl(\f^{n-m}(P),A_v'\bigr)
     >  \l_v(A_v',\infty) + \log\ell_v\bigr\},
\]
where we recall that $\ell_v=2$ if~$v$ is archimedean and~$\ell_v=1$
otherwise.  Set $S'' = S\setminus S'$.  Applying
Lemma~\ref{lemma:log_chordal} to the places in~$S'$ and using the
definition of~$S''$ for the places in~$S''$, we find that
\begin{align*}
  \ve &\hhat_{\f}(\f^n(P)) \\*
  &\le \biggl(\sum_{v\in S'} + \sum_{v\in S''}\biggr) d_v\l_v(\f^nP,A) 
    \qquad\hfill\text{since $n\in\G_S(A,P,\ve)$,} \\
  &\le \bfe_m \biggl(\sum_{v\in S'} + \sum_{v\in S''}\biggr) 
     d_v \l_v \bigl(\f^{n-m}(P), A_v'\bigr)  + O\bigl(h(A)+h(\f^m)+1\bigr) \\*
    &\omit\hfill\text{from the definition of $A'_v$ and \eqref{eqn:bound1},} \\
  &\le \bfe_m\sum_{v\in S'} d_v\bigl(2\l_v(A_v',\infty)
       -\log\bigl|z\bigl(\f^{n-m}(P)\bigr)-z(A_v')\bigr| +\log\ell_v\bigr) \\*
  &\qquad{} + \bfe_m\sum_{v\in S''} d_v\bigl(\l_v(A_v',\infty)+\log\ell_v\bigr)
       +  O\bigl(h(A)+h(\f^m)+1\bigr) \\*
    &\omit\hfill\text{from Lemma~\ref{lemma:log_chordal},} \\
  &\le \bfe_m \sum_{v\in S'}
       d_v\log\bigl|z\bigl(\f^{n-m}(P)\bigr)-z(A_v')\bigr|^{-1} \\*
  &\qquad{}+\bfe_m\sum_{v\in S} d_v\bigl(2\l_v(A_v',\infty)+\log\ell_v\bigr)
       +  O\bigl(h(A)+h(\f^m)+1\bigr).
\end{align*}
We now use Proposition~\ref{prop:heightproperty}(b,c) to observe
that
\begin{align*}
  \sum_{v\in S} d_v\l_v(A_v', \infty) 
  &\le \sum_{A'\in \f^{-m}(A)}  \sum_{v\in S} d_v\l_v(A', \infty) 
  \le \sum_{A'\in \f^{-m}(A)}  h(A')  \\
  &\le \sum_{A'\in \f^{-m}(A)}  \bigl(\hhat_\f(A') 
        +  O\bigl(h(\f) + 1\bigr) \bigr)\\
  &\le \hhat_\f(A) + O\bigl(h(\f) + 1\bigr) \bigr),
\end{align*}
Here the last line follows because there are at most~$d^m$ terms in
the sum, and $\hhat_\f(A')=d^{-m}\hhat_\f(A)$. The constants depend
only on~$m$ and~$d$, so on~$\ve$ and~$d$.  Further, from the
definition of~$\ell_v$ we have
\[
    \sum_{v\in S} d_v \log\ell_v \le  \log 2 .
\]
We also note from
Proposition~\ref{prop:polynomial-height-ineq}(d) 
that $h(\f^m)\ll h(\f)+1$, with the implied constant depending
only on $d$ and~$m$.  Hence
\begin{align}
  \label{eqn:ehhffnP}
  \ve \hhat_{\f}(\f^n(P)) 
  \le \smash[b]{
    \bfe_m \sum_{v\in S'}
  }
       d_v\log^+\bigl|z\bigl(\f^{n-m}&(P)\bigr)-z(A_v')\bigr|^{-1} \notag\\*
    &{} +   O\bigl(\hhat_\f(A)+h(\f)+1\bigr).
\end{align}
\par
We are going to apply Roth's theorem (Theorem~\ref{thm:roth})
to the set
\[
  \Upsilon = \bigl\{ z(A') : A'\in\f^{-m}(A) \bigr\}\subset\Kbar,
\]
the map $\b:S'\to\Upsilon$ given by $\b(v)=A_v'$, and the points
$x=\f^{n-m}(P)$ for $n\in\G_S(\e)$.  We note that~$\Upsilon$ is a
$G_{\Kbar/K}$-invariant set and that \text{$\#\Upsilon\le d^m$}. We apply
Theorem~\ref{thm:roth} to the set of places~$S'$, taking~$M=0$
and~$\mu=\frac52$. This gives constants~$r_1$ and~$r_2$, depending
only on~$[K:\QQ]$,~$d$, and~$\ve$, such that the set of $n\in
\G_S(\e)$ with $n>m$ can be written as a union of three sets,
\[
  \bigl\{n\in\G_S(\e):n>m\bigr\} = T_1 \cup T_2 \cup T_3,
\]
characterized as follows:
\begin{align*}
  \#T_1 &\le 4^{\#S'}r_1, \\
  T_2 &= \left\{ n>m : \sum_{v\in S'} 
    d_v \log^+\bigl|z\bigl(\f^{n-m}(P)\bigr)-z(A_v')\bigr|^{-1}
    \le \frac{5}{2} h\bigl(\f^{n-m}(P)\bigr) \right\}, \\
  T_3 &= \left\{ n>m : 
    h\bigl(\f^{n-m}(P)\bigr) \le r_2 \max_{v\in S'} \bigl\{h(A_v'),1\bigr\}
     \right\}.
\end{align*}
\par
We already have a bound for the size of~$T_1$, so we look at~$T_2$
and~$T_3$.  We start with~$T_3$ and use
Proposition~\ref{prop:heightproperty}(b,c)  to estimate
\begin{align*}
  h(A_v') 
    &\le \hhat_\f(A') +  c_3 h(\f) + c_4  \\
    &= d^{-m}\hhat_\f(A) +  c_3 h(\f) + c_4, \\[1\jot]
  h\bigl(\f^{n-m}(P)\bigr)
    &\ge \hhat_\f\bigl(\f^{n-m}(P)\bigr) - c_3 h(\f) - c_4\\
    &= d^{n-m}\hhat_\f(P) - c_3 h(\f) - c_4.
\end{align*}
Hence
\[
  T_3 \subset \bigl\{ n>m : 
    d^{n-m}\hhat_\f(P)\le c_5\hhat_\f(A)+c_6h(\f)+c_7 \bigr\},
\]
so every $n\in T_3$ satisfies
\begin{align}
  \label{eqn:T1bd}
  n &\le m 
   +  \log_d^+ \left(\frac{c_5\hhat_\f(A)+c_6h(\f)+c_7}{\hhat_\f(P)}\right) 
    \notag\\*
  &\le  c_8
   +  \log_d^+ \left(\frac{\hhat_\f(A)+h(\f)}{\hhat_\f(P)}\right).
\end{align}
\par
Finally, we consider the set~$T_2$. Again using
Proposition~\ref{prop:heightproperty}(b,c) to
relate~$h\bigl(\f^{n-m}(P)\bigr)$ to $d^{n-m}\hhat_\f(P)$, we find
that every~$n\in T_2$ satisfies
\[
  \sum_{v\in S'} 
    d_v \log^+\bigl|z\bigl(\f^{n-m}(P)\bigr)-z(A_v')\bigr|^{-1}
    \le \frac{5}{2}d^{n-m}\hhat_\f(P) + c_3h(\f) + c_4.
\]
We substitute this estimate into~\eqref{eqn:ehhffnP} to obtain
\[
  \ve \hhat_{\f}\bigl(\f^n(P)\bigr) 
  \le \bfe_m \frac{5}{2}d^{n-m}\hhat_\f(P)
    +   c_9\bigl(\hhat_\f(A)+h(\f)+1\bigr).
\]
We know from~\eqref{eqn:emk1k2dm} that $\bfe_m \le \ve d^m/5$,
and also $\hhat_{\f}\bigl(\f^n(P)\bigr) =d^n\hhat_\f(P)$, which yields
\[
  \ve d^n \hhat_\f(P)
  \le \left(\frac{\ve}{5}d^m\right)\frac{5}{2}d^{n-m}\hhat_\f(P)
    +   c_9\bigl(\hhat_\f(A)+h(\f)+1\bigr).
\]
A little bit of algebra gives the inequality
\begin{align}
  \label{eqn:T2bd}
  n &\le \log_d\left(2c_9 \frac{\hhat_\f(A)+h(\f)+1}{\ve\hhat_\f(P)}\right) 
         \notag\\
  &\le c_{10}+\log_d^+ \left(\frac{\hhat_\f(A)+h(\f)}{\hhat_\f(P)}\right).
\end{align}
Combining the estimate for~$\#T_1$ with the bounds~\eqref{eqn:T1bd}
and~\eqref{eqn:T2bd} for the largest elements in~$T_2$ and~$T_3$
completes the proof of~(a).
\par
We note that~(b) follows immediately from~(a).
\par
Finally, we prove~(c). Our first observation is that the
set~$\Upsilon=z\bigl(\f^{-m}(A)\bigr)$ used in the application of
Roth's theorem does not depend on the point~$P$. So the 
largest element in the finite set~$T_1$ is bounded independently of~$P$.
(Of course, since Roth's theorem is not effective, we do not have an
explicit bound for~$\max\Upsilon$ in terms~$K$,~$S$,~$\ve$,~$\f$ and~$A$,
but that is not relevant.)  
\par
Our second observation is to note that the quantity
\[
  \hhat_{\f,K}^{\min} \;\stackrel{\text{def}}{=}\;
  \inf\bigl\{ \hhat_\f(P) : 
    \text{$P\in\PP^1(K)$ wandering for $\f$} \bigr\}
\]
is strictly positive. To see this, let~$P_0\in\PP^1(K)$ be any
$\f$-wandering point. Then
\[
  \hhat_{\f,K}^{\min}
  = \inf\bigl\{ \hhat_\f(P) : \text{$P\in\PP^1(K)$ and
    $0 < \hhat_\f(P) \le \hhat_\f(P_0)$} \bigr\}.
\]
This last set is finite, so the infimum is over a finite set of
positive numbers, hence is strictly positive.  Therefore in the upper
bounds~\eqref{eqn:T1bd} and~\eqref{eqn:T2bd} for~$\max T_2$ and~$\max
T_3$, we may replace~$\hhat_\f(P)$ with~$\hhat_{\f,K}^{\min}$ to obtain
upper bounds that are independent of~$P$.
This proves that \text{$\max(T_1\cup T_2\cup T_3)$} may
be bounded independently of~$P$, which completes the proof of~(c).
\end{proof}

\section{A Bound for the Number of Integral Points in an Orbit}
\label{sec:bndintpts}

In this section, we use Theorem~\ref{thm:main-theorem} to give a
uniform upper bound for the number of $S$-integral points in an orbit.

\begin{corollary}
\label{cor:s-integer}
Let~$K$ be a number field, let~$S\subset M_K$ be a finite set of
places that includes all archimedean places, let~$R_S$ be the ring of
$S$-integers of~$K$, and let~$d\ge2$.  There is a
constant~$\g=\g\bigl(d,[K:\QQ]\bigr)$
such that
for all rational maps $\f \in K(z)$ of degree~$d$ satisfying
$\f^2(z) \notin K[z]$ and all $\f$-wandering points~$P\in\PP^1(K)$,
the number of $S$-integral points in the orbit of~$P$ is bounded by
\[
  \# \bigl\{ n\ge1 : z\bigl(\f^n(P)\bigr)\in R_S \bigr\}
  \le  4^{\#S}\g
      + \log_d^+ \left(\frac{h(\f)}{\hhat_\f(P)}\right).
      \]
  \end{corollary}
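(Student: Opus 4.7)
The plan is to apply Theorem~\ref{thm:main-theorem} with the special choice $A=\infty$ and $\ve=\tfrac12$, after translating integrality into the language of the main theorem.

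First, the hypothesis $\f^2(z)\notin K[z]$ says exactly that $\infty$ is not a totally ramified fixed point of $\f^2$ (a rational map on $\PP^1$ sends $\infty\mapsto\infty$ totally ramified iff it is a polynomial), hence by the equivalence recalled in Section~\ref{subsec:dynamicsheightfunction} that $\infty$ is not exceptional for $\f$; so Theorem~\ref{thm:main-theorem} is applicable with $A=\infty$. Next, if $z\bigl(\f^n(P)\bigr)\in R_S$, then at every non-archimedean $v\notin S$ we have $|\f^n(P)|_v\le 1$, so $\l_v(\f^n P,\infty)=\log^+|\f^n(P)|_v=0$. Since $S$ contains all archimedean places, the inequality $h(Q)\le\sum_{v\in M_K}d_v\l_v(Q,\infty)+\tfrac12\log 2$ from Section~\ref{subsec:dynamicsheightfunction} yields
\[
  \sum_{v\in S}d_v\,\l_v(\f^n P,\infty)\ \ge\ h(\f^n P)-\tfrac12\log 2.
\]

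Combining this with Proposition~\ref{prop:heightproperty}(b,c), which gives $h(\f^n P)\ge d^n\hhat_\f(P)-c_3h(\f)-c_4$, I would identify an explicit threshold
\[
  n_0\ =\ \log_d^+\!\left(\frac{c\,h(\f)+c'}{\hhat_\f(P)}\right)+O(1),
\]
with constants depending only on $d$, such that every $n\ge n_0$ for which $z(\f^n P)\in R_S$ automatically satisfies $\sum_{v\in S}d_v\l_v(\f^n P,\infty)\ge\tfrac12\hhat_\f(\f^n P)$, i.e.\ lies in $\G_{\f,S}(\infty,P,\tfrac12)$. Consequently the number of $S$-integral iterates is at most $n_0$ plus $\#\G_{\f,S}(\infty,P,\tfrac12)$.

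To finish, I would apply Theorem~\ref{thm:main-theorem}(b) with $A=\infty$ and $\ve=\tfrac12$. Since $h(\infty)=0$, Proposition~\ref{prop:heightproperty}(b) gives $\hhat_\f(\infty)\le c_3h(\f)+c_4$, so the resulting bound is of the form $4^{\#S}\g_3+\log_d^+\bigl((h(\f)+\hhat_\f(\infty))/\hhat_\f(P)\bigr)$. Adding this to the $n_0$ contribution and rolling the additive $O(1)$ terms and multiplicative constants (which depend only on $d$ and $[K:\QQ]$) into a single constant $\g$ produces the stated inequality. The only real work is bookkeeping of constants to collapse the various logarithmic expressions into the clean form $\log_d^+(h(\f)/\hhat_\f(P))$, which is the mildest of obstacles once $n_0$ and the main theorem are in hand.
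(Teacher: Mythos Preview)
Your proposal is correct and follows essentially the same route as the paper: translate $S$-integrality into the inequality $\sum_{v\in S}d_v\l_v(\f^nP,\infty)\ge h(\f^nP)$ (the paper gets this without the stray $-\tfrac12\log2$, but that is harmless), use Proposition~\ref{prop:heightproperty}(b,c) to split off the small-$n$ range via a threshold of size $\log_d^+\bigl(h(\f)/\hhat_\f(P)\bigr)$, and for the remaining $n$ land in $\G_{\f,S}(\infty,P,\tfrac12)$ and invoke Theorem~\ref{thm:main-theorem}(b) together with $\hhat_\f(\infty)\le c_3h(\f)+c_4$. The paper phrases the split as two cases on the size of $d^n\hhat_\f(P)$ rather than as a threshold $n_0$, but the arguments are the same.
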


\begin{proof}
By definition, an element $\a\in K$ is in~$R_S$ if and only if
$|\a|_v\le1$ for all~$v\notin S$, or equivalently, if and only 
if
\[
  h(\a) = \sum_{v\in S} d_v\log\max\bigl\{|\a|_v,1\bigr\}.
\]
We note that for $v\in M_K^0$ we have
\[
  \l_v(\a,\infty) = \l_v\bigl([\a,1],[1,0]\bigr)
  = \log\max\bigl\{|\a|_v,1\bigr\}.
\]
The formula for~$\l_v$ when~$v$ is archimedean is slightly different,
but the trivial inequality $\max\{t,1\} \le \sqrt{t^2+1}$ shows that
for $v\in M_K^\infty$ we have
\[
  \log\max\bigl\{|\a|_v,1\bigr\} \le \l_v(\a,\infty).
\]
Hence
\[
  \a\in R_S \quad\Longrightarrow\quad
   h(\a) \le \sum_{v\in S} d_v\l_v(\a,\infty).
\]
\par
Let $n\ge1$ satisfy $z\bigl(\f^n(P)\bigr)\in R_S$. Then
\begin{equation}
  \label{eqn:intpt1}
  h\bigl(\f^n(P)\bigr) \le \sum_{v\in S} d_v\l_v\bigl(\f^n(P),\infty\bigr).
\end{equation}
Proposition~\ref{prop:heightproperty} tells us that
\begin{equation}
  \label{eqn:intpt2}
  h\bigl(\f^n(P)\bigr)
  \ge \hhat_\f\bigl(\f^n(P)\bigr) - c_3h(\f) - c_4
  = d^n\hhat_\f(P) - c_3h(\f) - c_4,
\end{equation}
where~$c_3$ and~$c_4$ depend only on~$d$.
Combining~\eqref{eqn:intpt1} and~\eqref{eqn:intpt2} gives
\begin{equation}
  \label{eqn:intpt3}
  \sum_{v\in S} d_v\l_v\bigl(\f^n(P),\infty\bigr)
  \ge d^n\hhat_\f(P) - c_3h(\f) - c_4.
\end{equation}
\par
We consider two cases. First, if
\[
  d^n\hhat_\f(P) \le 2 c_3h(\f) + 2 c_4,
\]
then the number of possible values of~$n$ is at most
\[
  \log_d^+\left(\frac{2 c_3h(\f) + 2 c_4}{\hhat_\f(P)}\right),
\]
which has the desired form. Second, if
\[
  d^n\hhat_\f(P) \ge 2 c_3h(\f) + 2 c_4,
\]
then~\eqref{eqn:intpt3} implies that
\begin{equation}
  \label{eqn:intpt4}
  \sum_{v\in S} d_v\l_v\bigl(\f^n(P),\infty\bigr)
  \ge \frac12 d^n\hhat_\f(P) = \frac12\hhat_\f\bigl(\f^n(P)\bigr).
\end{equation}

Now Theorem~\ref{thm:main-theorem}(b) with $\ve=\frac12$ and $A=\infty$
tells us that the number of~$n$ satisfying~\eqref{eqn:intpt4} is
at most
\begin{equation}
  \label{eqn:intpt5}
   4^{\#S}\g_3
      + \log^+_d\left(\frac{h(\f) +  \hhat_\f(\infty)}{\hhat_\f(P)}\right),
\end{equation}
where~$\g_3$  depends only on~$[K:\QQ]$ and~$d$.  (Note that
our assumption that~$\f^2(z)$ is not a polynomial is equivalent to the
assertion that~$\infty$ is not an exceptional point for~$\f$. This is
needed in order to apply Theorem~\ref{thm:main-theorem}.)  It only
remains to observe that
\[
  \hhat_\f(\infty) \le h(\infty)+c_3h(\f)+c_4
  \qquad\text{and}\qquad
  h(\infty)=h\bigl([0,1]\bigr)=0
\]
to see that the bound~\eqref{eqn:intpt5} has the desired form.
\end{proof}


\end{document}